\documentclass[a4paper,12pt]{amsart}

\usepackage{amsmath}
\usepackage{amssymb}
\usepackage{mathrsfs}
\usepackage{ifthen}
\usepackage{graphicx}
\usepackage{tikz-cd}
\usepackage[T1]{fontenc} 

\def\switchlinenumbers{\@ifstar
    {\let\makeLineNumberOdd\makeLineNumberRight
     \let\makeLineNumberEven\makeLineNumberLeft}%
    {\let\makeLineNumberOdd\makeLineNumberLeft
     \let\makeLineNumberEven\makeLineNumberRight}%
    }

\def\setmakelinenumbers#1{\@ifstar
  {\let\makeLineNumberRunning#1%
   \let\makeLineNumberOdd#1%
   \let\makeLineNumberEven#1}%
  {\ifx\c@linenumber\c@runninglinenumber
      \let\makeLineNumberRunning#1%
   \else
      \let\makeLineNumberOdd#1%
      \let\makeLineNumberEven#1%
   \fi}%
  }

\nonstopmode \numberwithin{equation}{section}
\newtheorem*{theorem*}{Theorem}

\newtheorem{thm}{Theorem}[section]
\newtheorem{cor}[equation]{Corollary}
\newtheorem{lem}{Lemma}[section]

\theoremstyle{definition}
\newtheorem{defn}{Definition}[section]

\newtheorem{qsn}[equation]{Question}
\newtheorem{prob}[equation]{Problem}
\newtheorem{rem}{Remark}[section]

\newcounter{minutes}
\divide\time by 60
\newcounter{hours}
\multiply\time by 60
\addtocounter{minutes}{-\time}

\newcounter {own}
\def\theown {\thesection       .\arabic{own}}

\newenvironment{pf}[1][]{%
 \vskip 3mm
 \noindent
 \ifthenelse{\equal{#1}{}}%
  {{\slshape Proof. }}%
  {{\slshape #1.} }%
 }%
{\qed\bigskip}

\newcounter{alphabet}

\def\be{\begin{equation}}
\def\ee{\end{equation}}

\newcommand{\bee}{\begin{enumerate}}
\newcommand{\eee}{\end{enumerate}}

\newcommand{\blem}{\begin{lem}}
\newcommand{\elem}{\end{lem}}
\newcommand{\bthm}{\begin{thm}}
\newcommand{\ethm}{\end{thm}}
\newcommand{\bcor}{\begin{cor}}
\newcommand{\ecor}{\end{cor}}
\newcommand{\beg}{\begin{examp}}
\newcommand{\eeg}{\end{examp}}
\newcommand{\begs}{\begin{examples}}
\newcommand{\eegs}{\end{examples}}
\newcommand{\bdefe}{\begin{defin}}
\newcommand{\edefe}{\end{defin}}
\newcommand{\bprob}{\begin{prob}}
\newcommand{\eprob}{\end{prob}}
\newcommand{\bei}{\begin{itemize}}
\newcommand{\eei}{\end{itemize}}
\newcommand{\real}{{\operatorname{Re}\,}}

\newcommand{\norm}[1]{\left\lVert#1\right\rVert}

\begin{document}

\title{Local Nevanlinna embedding for Hardy space of Dirichlet series below critical exponent}

\author{Vasudevarao Allu}
\address{Vasudevarao Allu,
Department  of Mathematics, School of Basic Sciences,
Indian Institute of Technology Bhubaneswar,
Bhubaneswar-752051, Odisha, India.}
\email{avrao@iitbbs.ac.in}

\author{Dipon Kumar Mondal}
\address{Dipon Kumar Mondal,
Department  of Mathematics, School of Basic Sciences,
Indian Institute of Technology Bhubaneswar,
Bhubaneswar-752051, Odisha, India.}
\email{diponkumarmondal@gmail.com}

\subjclass[{AMS} Subject Classification:]{Primary 30H10, 30H15, 30H20, 30B50}
\keywords{Local embedding, Local Nevanlinna embedding, Nevanlinna class, Hardy space of Dirichlet series}

\def\thefootnote{}
\footnotetext{ {\tiny File:~\jobname.tex,
printed: \number\year-\number\month-\number\day,
          \thehours.\ifnum\theminutes<10{0}\fi\theminutes }
} \makeatletter\def\thefootnote{\@arabic\c@footnote}\makeatother

\begin{abstract}
In this paper, we investigate a problem posed by Saksman and Seip [ Contemp. Math., 2016]  on whether functions in the Hardy space of Dirichlet series \(\mathscr H^p\), \(0<p<2\), are locally in the Nevanlinna class of the critical half-plane \(\mathbb C_{1/2}\). We show that the local Nevanlinna embedding fails below critical exponent \(p=2\) by constructing \(\mathscr H^p\)-functions whose local logarithmic mass becomes unbounded as \(\real s\to 1/2^+\). This also clarifies the distinction between global Jessen-type logarithmic control and local boundary Nevanlinna behavior for Dirichlet series.
\end{abstract}

\allowdisplaybreaks
\maketitle
\pagestyle{myheadings}
\markboth{Vasudevarao Allu and Dipon Kumar Mondal}{Local Nevanlinna embedding problem}

\section{Introduction}
Hardy spaces of Dirichlet series form a natural meeting point of analytic function theory, harmonic analysis, and complex analysis. If $f(s)=\sum_{n=1}^{\infty}a_n n^{-s}$ is a Dirichlet series, then the Bohr's correspondence identifies $f$ with a holomorphic function on the infinite-dimensional polydisk by writing $n=\prod_{j}p_j^{\nu_j(n)}$ and associating to $f$ the formal power series $\mathcal Bf(z) =\sum_{n=1}^{\infty}a_n z^{\nu(n)}.$
For \(0<p<\infty\), the Hardy space \(\mathscr H^p\) is obtained as the completion of Dirichlet polynomials with respect to the norm
\[
\|f\|_{\mathscr H^p}
=
\|\mathcal Bf\|_{H^p(\mathbb T^\infty)}.
\]
Equivalently, for Dirichlet polynomials this norm is represented by the global vertical mean
\[
\|f\|_{\mathscr H^p}^{p}
=
\lim_{T\to\infty}
\frac1{2T}
\int_{-T}^{T}|f(it)|^p\,dt.
\]

A basic question is to understand how this global mean norm controls the behavior of \(f\) on finite portions of vertical lines. The natural boundary for this problem is the critical line $\real(s)=1/2.$
Indeed, point-evaluation functionals are bounded on $\mathcal{H}^p$ throughout the half-plane $\mathbb{C}_{1/2}$, where for any $\theta \in \mathbb{R}$, we denote $$\mathbb{C}_\theta := \{s \in \mathbb{C} \colon \operatorname{Re}(s) > \theta\}$$
and the corresponding evaluation norm grows without bound as \(\real(s)\to 1/2^+\). Thus the line $\real s=1/2$ is the endpoint at which one may ask whether the global \(\mathscr H^p\)-norm gives a uniform local trace estimate. The recent formulation emphasizes that restriction to this line amounts, under the Bohr transform, to sampling an infinite-dimensional Hardy function along a single one-dimensional prime-flow orbit, whereas the \(\mathscr H^p\)-norm averages over all prime coordinates.

This leads to the local embedding problem: determine for which \(p>0\) there exists a constant \(C_p>0\) such that every Dirichlet polynomial \(P\) satisfies
\begin{equation}\label{Dipon-vasu-p3-eqn-013}
	\sup_{\tau\in\mathbb R}
	\int_{\tau}^{\tau+1}
	\left|
	P\left(\frac12+it\right)
	\right|^p\,dt
	\le
	C_p\|P\|_{\mathscr H^p}^{p}.
\end{equation}
The essential feature of this estimate is that the constant \(C_p\) must be independent of the number and choice of prime variables appearing in \(P\). In this sense, the local embedding problem is a genuinely dimension-free trace problem.

The local embedding problem was formulated explicitly by Hedenmalam, Lindqvist and Seip \cite[Theorem 4.11]{Hedenmalm-Duke-1997} and later Saksman and Seip \cite[Section~3]{Saksman-BLMS-2009} recorded as Problem~\ref{Dipon-vasu-p3-prob-01} in their list of open problems for Dirichlet series \cite[Problem~2.1]{Saksman-survey-2016}. Subsequent investigations revealed several equivalent formulations of the problem. Bayart and Brevig \cite{Bayart-MathZ-2019} demonstrated that the local embedding property is equivalent to both the conformally invariant half-plane embedding and the boundedness of all characteristic zero symbols in the Gordon-Hedenmalm class \cite{Gordon-Michigan-1999} of composition operators; notably, the boundedness of a single canonical operator suffices. Olsen and Saksman \cite{Olsen-Angew-2012} identified local embedding with the reverse transfer of local Carleson measures, proving that this transfer may be tested on a distinguished family of atomic measures whose masses are proportional to their distance from the critical boundary. Furthermore, for $2 \le p < \infty$, Saksman and Seip \cite{Saksman-BLMS-2009} established that the local embedding problem is equivalent to a strengthened Carleson-type ergodic identity on the critical distinguished boundary, required to hold for every starting point. Brevig, Ortega-Cerd\`a, and Seip \cite{Brevig-GAFA-2021} highlighted the local embedding problem as ``perhaps the most important open problem'' concerning the Hardy spaces $\mathcal{H}^p$.\\[0.5mm]

Historically, the estimate was known at \(p=2\), where the Hilbert-space structure and orthogonality of Dirichlet monomials play a decisive role. By applying the \(p=2\) result to integer powers, one also obtains the inequality for every even exponent \(p=2k\). In contrast, Harper's low-moment estimates for random multiplicative functions imply that the local embedding inequality fails for every \(0<p<2\). Thus, for a long time, the unresolved range consisted of the non-even exponents \(p>2\).
This gap was recently closed by Chen, Fang, Guo, Hou, Shao, and Zhou \cite{Chen-arXiv-2026}, who proved a dimension-free local trace estimate for every \(2<p<\infty\). Combined with the classical \(p=2\) case and the known failure for \(p<2\), their result gives the sharp classification $$\operatorname{LEP}
	\text{ holds if, and only if, }
	p\ge2.$$

Hence $p=2$ is the critical exponent for the local embedding problem. 
The significance of the local embedding problem extends well beyond the estimate itself. It is closely connected with boundary behavior of Hardy space of Dirichlet series, Carleson measure transfer, critical-line sampling, composition operators, and the geometry of the prime flow. The recent work shows that the local embedding property is equivalent, in the appropriate exponent ranges, to several classical trace and Carleson-measure formulations. At the same time, failure of the local \(L^p\)-embedding below \(p=2\) raises a subtler question: although \(|f|^p\) may fail to admit uniform local control, can the much weaker logarithmic quantity
$\log^+|f|$
still remain locally integrable near the critical line?
This question leads naturally to the problem posed by Saksman and Seip \cite{Saksman-survey-2016} concerning local Nevanlinna behavior of \(\mathscr H^p\)-functions for \(p<2\). They imposed the following problem: 
\begin{prob}\cite[Problem 2.4]{Saksman-survey-2016}\label{Dipon-vasu-p3-prob-01}
	Are elements of $\mathcal{H}^p$ for $0< p < 2$ locally in the Nevanlinna class?
\end{prob}

Since, for $p>0$
\[
\log^+x\le \frac{x^p}{p},
\qquad x>0,
\]
the local \(L^p\)-embedding implies local logarithmic control whenever it holds. Below the critical exponent \(p=2\), however, the \(L^p\)-embedding breaks down, and it becomes meaningful to ask whether the weaker Nevanlinna-type trace can nevertheless survive. Thus the local Nevanlinna problem may be viewed as a natural logarithmic counterpart of the local embedding problem and provides a finer test of boundary regularity in Hardy spaces of Dirichlet series.

Let $\operatorname{Hol}(\Omega)$ denote the space of holomorphic functions on a domain $\Omega \subset \mathbb{C}$, endowed with the topology of uniform convergence on compact subsets.
 A function $f \in \operatorname{Hol}(\mathbb{C}_\theta)$ belongs to the \textit{local Nevanlinna class}, denoted as $f \in N_{\mathrm{loc}}(\mathbb{C}_\theta)$, if for every $t_0 \in \mathbb{R}$ there exist $\delta, r > 0$ and a positive harmonic function $h$ on the local rectangle
\begin{equation*}
	Q_\theta(t_0; \delta, r) := \{s = \sigma + it \in \mathbb{C} \colon \theta < \sigma < \theta + \delta, \, |t - t_0| < r\}
\end{equation*}
such that
\begin{equation*}
	\log^+ |f(s)| \le h(s) \qquad \text{for all } s \in Q_\theta(t_0; \delta, r).
\end{equation*}
Equivalently, $f \in N_{\mathrm{loc}}(\mathbb{C}_\theta)$ if $\log^+ |f|$ admits a harmonic majorant in a neighborhood, relative to $\mathbb{C}_\theta$, of every finite boundary segment of the line $\operatorname{Re}(s) = \theta$.

A useful consequence of the definition is the following  \textit{logarithmic trace criterion}, that if
$f \in N_{\mathrm{loc}}(\mathbb{C}_\theta),$
then for every compact interval $I \Subset \mathbb{R}$, after possibly shrinking the surrounding rectangle, there exists $\varepsilon_I > 0$ such that
\begin{equation*}
	\sup_{0 < \varepsilon < \varepsilon_I} \int_I \log^+ |f(\theta + \varepsilon + it)| \, dt < \infty.
\end{equation*}
Conversely, if the uniform estimate
\begin{equation*}
	\sup_{0 < \varepsilon < \varepsilon_0} \int_{I^*} \log^+ |f(\theta + \varepsilon + it)| \, dt < \infty
\end{equation*}
holds on a slightly larger open interval $I^* \supset I$, then $\log^+ |f|$ admits a positive harmonic majorant on a smaller boundary rectangle over $I$. Thus, up to standard enlargement or shrinking of intervals, the harmonic-majorant definition and the local logarithmic trace condition are equivalent.

Recently, Guo, Ni and Zhou \cite{GuoZhou-AnnalFourier-2025} introduced the Nevanlinna class in the infinite many variables. It is well-known that functions of infinitely many variables and Dirichlet series are intimately linked via the Bohr correspondence. In connection with composition operators on Dirichlet series spaces, Brevig and Perfekt \cite{Brevig-AdvMath-2021} introduced the following notion of the Nevanlinna class for a Dirichlet series $f$.

\begin{defn}\cite{Brevig-AdvMath-2021}
	A Dirichlet series $f$ with uniform abscissa $\sigma_u(f)\le 0$ is defined in the \textit{Nevanlinna class of Dirichlet series} $\mathcal{N}_u$ if the following condition holds:
	\begin{equation}\label{Dipon-vasu-p2-eqn-014}
		{\limsup_{\sigma\to 0^+}}\lim_{T\to\infty}\frac{1}{2T}\int_{-T}^{T}\log^+|f(\sigma +it)|\,dt<\infty,
	\end{equation}
	where $\log^+(x) = \max\{\log(x), 0\}$.
\end{defn}
The abscissa of uniform convergence of a Dirichlet series $f$, denoted by $\sigma_u(f)$, is defined by
\begin{equation*}
	\sigma_u(f): = \inf\bigg\{\theta\in\mathbb{R}: f(s)= \sum_{n=1}^{\infty}a_nn^{-s}\,\,\,\mbox{converges uniformly on}\,\,\, \overline{{\mathbb{C}_\theta}}\bigg\}.
\end{equation*}
If $\sigma_u(f) \le 0$ and $f \in \mathcal{H}^p$, then $f \in \mathcal{N}_u$, which follows from the elementary inequality $\log^+ t \le \frac{1}{p}\, t^p$ (valid for $t \ge 0$ and $p > 0$). We emphasize that the condition $\sigma_u(f) \le 0$ is essential; without it, the class $\mathcal{N}_u$ can not be properly defined. For example, consider the Dirichlet series$$g(s) = \sum_{n=2}^\infty \frac{n^{-s}}{\sqrt{n} \log n}.$$It is easy to see that $g \in \mathcal{H}^2$, whereas $\sigma_u(g) = 1/2$.
The inequalities  
\begin{equation*}
	\log^+(x)\le\log(1+x)\le \log(2)+\log^{+}(x),\,\,\, x\ge 0
\end{equation*}
along with \eqref{Dipon-vasu-p2-eqn-014} ensures that	$f\in \mathcal{N}_u$ if, and only if,
\begin{equation}\label{Dipon-vasu-p3-eqn-014}
	\norm{f}_{0}:= \limsup_{\sigma\to 0^+}\lim_{T\to\infty}\frac{1}{2T}\int_{-T}^{T}\log(1+|f(\sigma+it)|)\,dt <\infty,\,\,\,\, f\in\mathcal{N}_u.
\end{equation}
 Here, the class $\mathcal{N}_u$ is an $F$-space (see for instance \cite{ShapiroShields-Amer-1975}); that is, a topological vector space whose topology is induced by a complete, translation-invariant metric defined by
\begin{equation*}
	d_0(f,g) = \norm{f-g}_0 \quad \text{for } f,g \in \mathcal{N}_u.
\end{equation*}
Since $f$ is a Dirichlet series with $\sigma_u(f) \le 0$ and $h(x) = \log(1+x)$ is continuous and non-decreasing on $[0, \infty)$, the composition $t \mapsto \log(1+\vert{}f(it)\vert{})$ is almost periodic (see, e.g., \cite{Besicovitch-AP-Book,Queffelec-TRM-2020}). Consequently, the limit in  \eqref{Dipon-vasu-p3-eqn-014} exists.
In contrast, Guo \textit{et al.} \cite{GuoZhou-AnnalFourier-2025} defined the Nevanlinna-Dirichlet class $\mathcal{N}$ as the completion of $\mathcal{N}_u$ under the metric $\Vert{}\cdot\Vert{}_0$ given in \eqref{Dipon-vasu-p3-eqn-014}. Under the canonical embedding of \(\mathcal N_u\) into \(\mathcal N\), the topology induced on \(\mathcal N_u\) by the completion coincides with the topology induced by \(\|\cdot\|_0\). \\[0.5mm]
 
 The novelty of the present work lies in identifying and resolving the gap between global Nevanlinna-type control for Dirichlet series and local boundary Nevanlinna behavior on the critical half-plane. More precisely, for \(0<p<2\), we show that membership in the Hardy space \(\mathscr H^p\) does not in general imply local Nevanlinna regularity in \(\mathbb C_{1/2}\), thereby giving a negative answer to Saksman-Seip’s Problem 2.4. The argument exploits Harper-type low-moment estimates together with a gliding-hump construction to produce an \(\mathscr H^p\) Dirichlet series whose logarithmic mass becomes unbounded on a fixed finite interval as the line \(\real s=1/2\) is approached. After translation, this also yields a separation between the global Nevanlinna class \(\mathcal N_u\) and local classical Nevanlinna behavior, showing that global logarithmic control need not imply local boundary control. In this way, the result identifies \(p=2\) as a natural threshold not only for local \(L^p\) trace estimates, but also for universal local Nevanlinna behavior of Hardy-Dirichlet series. \\[0.2mm]
 
 The paper is organized as follows. Section~\ref{Dipon-vasu-p3-sec-02} presents our main results concerning the negative resolution to Problem~\ref{Dipon-vasu-p3-prob-01}, along with several key supporting lemmas largely inspired by the work of Harper \cite{Harper-ForumPi-2020}. Section~\ref{Dipon-vasu-p3-sec-03} opens with the proof of Theorem~\ref{Dipon-vasu-p3-thm-01}, followed by a detailed investigation linking the local Nevanlinna embedding problem ($\operatorname{LNEP}$) to global and local logarithmic mean control, as well as to the original problem of Saksman and Seip (Problem~\ref{Dipon-vasu-p3-prob-01}). \\[0.2mm]
 
 \paragraph{\textbf{Notation}} Throughout this article, $C$ denotes a positive constant which may vary from line to line. When necessary, we write that $C=C(x)$ to indicate that the constant depends on a parameter $x.$ If $f$ and $g$ are two real functions defined on the same set $S$, the notation $f \asymp g$ (or $f \asymp_p g$) means that there exists a constant $C\ge 1$ such that $C^{-1}f(x) \le g(x) \le Cf(x)$ (or $C^{-1}(p) f(x) \le g(x) \le C(p) f(x)$), for all $x$, while $f(x)\lesssim g(x)$ (or $f(x)\lesssim_p g(x)$) signifies that there exists a constant $C>0$ such that $f(x)\le Cg(x)$ (or $f(x)\le C(p) g(x)$), for all $x$. Also, the symbol $O_p(1)$ indicates a sequence $(a_n)_{n=1}^\infty$ is bounded by a constant depending \emph{exclusively} on the parameter $p$; that is, there exist $C(p) > 0$ and $N_0(p) \in \mathbb{N}$ such that $|a_n| \le C(p)$ for all $n \ge N_0(p)$.
 
\section{Main Results and Supporting Lemmas}\label{Dipon-vasu-p3-sec-02}
This section starts with our primary result, which provides a negative resolution to Problem~\ref{Dipon-vasu-p3-prob-01}, followed by a few key supporting lemmas required for its proof.

\begin{thm}\label{Dipon-vasu-p3-thm-01}
	Let $0 < p < 2$. Then there exists a Dirichlet series $f(s) = \sum_{n=1}^\infty a_n n^{-s} \in \mathcal{H}^p$ such that
	\begin{equation*}
		\limsup_{\varepsilon \to 0^+} \int_0^1 \log^+ \left| f\left(\frac{1}{2} + \varepsilon + it\right) \right| \, dt = +\infty.
	\end{equation*}
	Consequently, $f$ does not belong locally to the Nevanlinna class in the half-plane $\mathbb{C}_{1/2}$. In particular,
	\begin{equation*}
		\mathcal{H}^p \not\subset N_{\mathrm{loc}}(\mathbb{C}_{1/2}), \qquad 0 < p < 2,
	\end{equation*}
	thereby answering Problem~2.4 of Saksman and Seip in the negative.
\end{thm}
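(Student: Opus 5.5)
The plan is to build $f$ as a gliding-hump sum of normalized truncated zeta sums. Harper's low-moment estimates make these sums small in $\mathscr H^p$, while on $[0,1]$ near the critical line they are uniformly large.

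\emph{Step 1 (building block).} For $x\ge 3$ put $Z_x(s)=\sum_{n\le x} n^{-1/2-s}$. Under the Bohr correspondence, $\|Z_x\|_{\mathscr H^p}^p=\mathbb E\big|\sum_{n\le x} f(n)n^{-1/2}\big|^p$, where $f$ is a Steinhaus random multiplicative function. Write $p=2q$ with $0<q<1$. Harper's low-moment theorem (\cite{Harper-ForumPi-2020}, in its weighted/partial-summation form) should give
\[
\|Z_x\|_{\mathscr H^p}\lesssim_p \frac{(\log x)^{1/2}}{(\log\log x)^{1/4}}.
\]
I will state this bound as a lemma. The weight $n^{-1/2}$ is why the natural size is $(\log x)^{1/2}$ rather than $x^{1/2}$.

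Next I compare this with the pointwise size at $s=\varepsilon+it$, $t\in[0,1]$. By Euler--Maclaurin, uniformly for $0\le\varepsilon\le 1/4$ and $t\in[0,1]$,
\[
\sum_{n\le x} n^{-1-\varepsilon-it}=\zeta(1+\varepsilon+it)-\frac{x^{-\varepsilon-it}}{\varepsilon+it}+O(x^{-1}).
\]
The pole of $\zeta$ at $1$ cancels against the second term. When $\varepsilon\log x\le 1$ and $|t|\le 1$, the right-hand side is $\log x+O(1)$ in absolute value for $|t|\le 1/\log x$, and is $\gg \log(1/\max(|t|,\varepsilon))$ in general. This needs checking, but in any case $\int_0^1\log^+|Z_x(\varepsilon+it)|\,dt\gg 1$ uniformly. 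Near $t=0$ the sum really is of size $\asymp\log x$.

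Set $g_x=Z_x/\|Z_x\|_{\mathscr H^p}$. On the interval $t\in[0,(\log x)^{-1}]$ we get $|g_x(\varepsilon+it)|\gg(\log x)^{1/2}(\log\log x)^{1/4}$. That interval has length only $1/\log x$, so it contributes about $\log\log x/\log x$ to the integral, which is useless. To spread the largeness over all of $[0,1]$, I will instead use
\[
G_x(s)=\frac1M\sum_{m=1}^{M} m^{-i\cdot}\text{-shifts},
\]
or, more simply, vertical translates $Z_x(s+i t_j)$ with $t_j=j/\log x$ placed on a grid, each carrying an independent random unimodular coefficient, and take the expectation. A cleaner alternative, which I would try first, is to replace $Z_x$ by $Z_x^k$ with $k$ large but fixed. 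For $k$ large relative to $1/p$, the log-mass $k\log|Z_x|$ integrated over $[0,1]$ grows like $k\log\log x$, while Harper bounds $\|Z_x^k\|_p=\|Z_x\|_{kp}^k$ only for $kp<2$. So the real route is this: choose the block so that $\int_0^1\log^+|g(\varepsilon+it)|\,dt\ge A_k$ and $\|g\|_{\mathscr H^p}\le 1$, with $A_k\to\infty$.

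\emph{Step 2 (gliding hump).} Choose blocks $g_k$ with $\|g_k\|_{\mathscr H^p}\le 1$ and $\int_0^1\log^+|g_k(\tfrac12+\varepsilon+it)|\,dt\ge A_k$ for $\varepsilon\in(0,\varepsilon_k]$. Here $g_k$ is written in the variable $s$ so that the critical line is $\operatorname{Re}s=1/2$. Set $f=\sum_k c_k g_k$ with $\sum c_k^p<\infty$; the $p$-triangle inequality then gives $f\in\mathscr H^p$. Choose $c_k=2^{-k}$ and $A_k\ge 4^k$. Then $\log^+|c_kg_k|\ge\log^+|g_k|-k\log 2$, so the $k$-th term still has log-mass $\gtrsim 4^k$. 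The frequencies are chosen inductively so that the $x_k$ grow very fast. At the evaluation line $\varepsilon_k$, the earlier polynomials are continuous on $[\tfrac12,1]\times[0,1]$ and hence bounded by $B_{k-1}$. The later blocks $j>k$ are tiny there, because point evaluation on $\operatorname{Re}s\ge\tfrac12+\varepsilon_k$ is bounded with constant $\zeta(1+2\varepsilon_k)^{1/p}$ and $c_j\to0$ fast. Since $\mathscr H^p$ convergence implies locally uniform convergence on $\mathbb C_{1/2}$, $f$ is the pointwise sum. Using $\log^+|a+b|\ge\log^+|a|-\log^+|b|-\log 2$, I obtain $\int_0^1\log^+|f|\ge 4^k-Ck-\log^+B_{k-1}-O(1)$. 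The extra freedom of sending $x_k\to\infty$ makes $A_k$ dominate $B_{k-1}$.

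\emph{Main obstacle.} The hard part is Step 1: producing normalized blocks whose log-mass over the \emph{whole} interval $[0,1]$ is unbounded. Harper's bound only yields a gain of $(\log\log x)^{1/4}$ in sup-type size, and for truncated zeta sums that gain lives on a short interval. I would attack this by using the building block $\sum_{n\le x}\chi(n)n^{-1/2-s}$ with a random multiplicative $\chi$ conditioned on a large value on the prime-flow orbit. An averaging (Fubini) argument over the Bohr torus then shows that a typical $g$ with $\mathbb E\|g\|_p^p$ controlled has $\int_0^1\log^+|g|\gg\tfrac14\log\log\log x\cdot$(measure of the good $t$). This in turn requires a lower bound, in measure over $t\in[0,1]$, for a Harper-type comparison between the random model and the deterministic orbit. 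That comparison is where I expect the real work to lie.
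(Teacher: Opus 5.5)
Your gliding-hump Step 2 is essentially the paper's Lemma~\ref{Dipon-vasu-p3-lem-4}. Step 1, however, which you flag as the main obstacle, is never established, and the obstacle comes from using the wrong block.

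The sum you actually estimate, $\sum_{n\le x}n^{-1-\varepsilon-it}$, is what $Z_x(s)=\sum_{n\le x}n^{-1/2-s}$ looks like at $s=\tfrac12+\varepsilon+it$, which is where Step 2 evaluates it. The weight $n^{-1/2}$ pushes the truncated zeta sum over to the line $\operatorname{Re}=1$. There it is of size $\log x$ only for $|t|\lesssim 1/\log x$, so after normalization its log-mass over $[0,1]$ is indeed negligible. The weight also replaces the norm that Harper's theorem controls by a moment of the weighted sum $\sum_{n\le x}f(n)n^{-1/2}$, for which you only conjecture a bound. The substitutes you then list are not carried out: random vertical translates, powers $Z_x^k$ (which you note are out of reach once $kp\ge 2$), and a conditioned random $\chi$. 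So as written, the proposal contains no block with $\|g\|_{\mathscr H^p}\le 1$ and unbounded log-mass on $[0,1]$.

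The missing observation is that the unweighted sum $D_x(s)=\sum_{n\le x}n^{-s}$ already does this on the critical line itself. Partial summation gives, uniformly for $0\le t\le 1$ and $0\le\varepsilon\le 1/\log x$,
\[
D_x\bigl(\tfrac12+\varepsilon+it\bigr)=\frac{x^{1/2-\varepsilon-it}}{1/2-\varepsilon-it}+O(1),
\]
where the $O(1)$ absorbs $\zeta(\tfrac12+\varepsilon+it)$. Since $|1/2-\varepsilon-it|$ stays bounded away from $0$ and $\infty$ and $x^{-\varepsilon}\ge e^{-1}$, you get $|D_x(\tfrac12+\varepsilon+it)|\asymp\sqrt x$ on the \emph{whole} interval, not just near $t=0$.

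Harper's theorem in its plain unweighted form, $\mathbb E\bigl|\sum_{n\le x}f(n)\bigr|^{2q}\asymp\bigl(x/(1+(1-q)\sqrt{\log\log x})\bigr)^q$ with $p=2q$, gives $\|D_x\|_{\mathscr H^p}\asymp_p \sqrt x/(\log\log x)^{1/4}$. Hence $Q_x=D_x/\|D_x\|_{\mathscr H^p}$ has norm $1$, and with $\varepsilon_x=1/\log x$ it satisfies $|Q_x(\tfrac12+\varepsilon_x+it)|\gtrsim_p(\log\log x)^{1/4}$ for all $t\in[0,1]$. Its log-mass on $[0,1]$ is therefore at least $\tfrac14\log\log\log x-O_p(1)$. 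This is the paper's Lemma~\ref{Dipon-vasu-p3-lem-2}, and it removes your obstacle entirely.

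With this block your Step 2 works after one correction in the order of choices. If $c_k=2^{-k}$ is fixed in advance and $\varepsilon_k=1/\log x_k$, the point-evaluation tail bound is $\zeta(1+2\varepsilon_k)^{1/p}2^{-k}\approx(2\varepsilon_k)^{-1/p}2^{-k}$. Its logarithm is of order $\tfrac1p\log\log x_k$, which outgrows the $\tfrac14\log\log\log x_k$ gain of the block, so enlarging $x_k$ only makes things worse. As in the paper, first choose $c_k$, after $\varepsilon_1,\dots,\varepsilon_{k-1}$ are fixed, so that $c_k\,\zeta(1+2\varepsilon_j)^{1/p}\le 2^{-k}$ for all $j<k$. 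Only then choose $x_k$ so large that $c_k\inf_{0\le t\le 1}|Q_{x_k}(\tfrac12+\varepsilon_k+it)|$ dominates $B_{k-1}+k$.
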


\begin{rem}\label{Dipon-vasu-p3-cor-01}
	For every $0 < p < 2$, there exists a Dirichlet series $g(s) = \sum_{n=1}^\infty b_n n^{-s}$ satisfying $\sigma_u(g) \le 0$ and $g \in \mathcal{N}_u$, such that $g \notin N_{\mathrm{loc}}(\mathbb{C}_0)$, where $\mathbb{C}_0 = \{s \in \mathbb{C} \colon \operatorname{Re}(s) > 0\}$. More precisely,
	\begin{equation*}
		\limsup_{\sigma \to 0^+} \int_0^1 \log^+ |g(\sigma + it)| \, dt = +\infty,
	\end{equation*}
	so that $\mathcal{N}_u \not\subset N_{\mathrm{loc}}(\mathbb{C}_0)$. The function $g$ is obtained from $f$ in Theorem~\ref{Dipon-vasu-p3-thm-01} via the translation $g(s) = f(s + 1/2)$, highlighting the sharp distinction between global Jessen-type logarithmic control $\mathcal{N}_u$ and local logarithmic boundary trace control near a finite segment of the imaginary axis.
\end{rem}

\begin{lem}\label{Dipon-vasu-p3-lem-1}
	Let $Q = \{s = \sigma + it \in \mathbb{C} \colon 1/2 < \sigma < 1/2 + \delta, \, a < t < b\}$. If $f \in \operatorname{Hol}(Q)$ and $\log^+ |f|$ admits a harmonic majorant in $Q$, then for every compact sub interval $I \subset (a, b)$ and every $0 < \delta' < \delta$,
	\begin{equation*}
		\sup_{0 < \varepsilon < \delta'} \int_I \log^+ \left| f\left(\frac{1}{2} + \varepsilon + it\right) \right| \, dt < +\infty.
	\end{equation*}
\end{lem}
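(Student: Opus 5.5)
The plan is to replace $\log^+|f|$ by its harmonic majorant $h$ and bound the vertical integrals of $h$ itself: since $0\le \log^+|f|\le h$ on $Q$, it suffices to show
\[
\sup_{0<\varepsilon<\delta'}\int_I h\left(\tfrac12+\varepsilon+it\right)dt<\infty .
\]
We may assume $h>0$; if it is merely nonnegative, replace $h$ by $h+1$. The tool is a Herglotz-type representation of $h$ on the rectangle. $Q$ is a Jordan domain, so a conformal map $\varphi:\mathbb D\to Q$ extends to a homeomorphism of the closures by Carath\'eodory's theorem. The classical Herglotz representation of $h\circ\varphi$ then transports to
\[
h(z)=\int_{\partial Q}P_Q(z,\zeta)\,d\mu(\zeta),
\]
where $\mu$ is a finite positive Borel measure on $\partial Q$ and $P_Q(z,\cdot)\,d(\cdot)$ denotes the harmonic-measure/Poisson kernel of $Q$, understood as the transported kernel.

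I split $\mu=\mu_1+\mu_2$. Here $\mu_1$ is the restriction of $\mu$ to the open left edge $L=\{1/2+it: a<t<b\}$, and $\mu_2$ is the rest, including any mass at the corners. This gives $h=h_1+h_2$ with both $h_j\ge 0$ harmonic.

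\emph{The part $h_1$.} I compare with the half-plane $\mathbb C_{1/2}$. Since $Q\subset\mathbb C_{1/2}$, the maximum principle gives $G_Q\le G_{\mathbb C_{1/2}}$ for the Green functions. Both vanish on $L$, so comparing inner normal derivatives gives, for $\zeta\in L$,
\[
P_Q(z,\zeta)\le P_{\mathbb C_{1/2}}(z,\zeta)=\frac{1}{\pi}\frac{\varepsilon}{\varepsilon^2+(t-\operatorname{Im}\zeta)^2},\qquad z=\tfrac12+\varepsilon+it .
\]
Equivalently, harmonic measure of $Q$ of a Borel set $E\subset L$ is at most harmonic measure of $\mathbb C_{1/2}$ of $E$. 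Integrating in $t$ over all of $\mathbb R$, which contains $I$, and using Fubini gives
\[
\int_I h_1(\tfrac12+\varepsilon+it)\,dt\le \mu(L)
\]
for every $\varepsilon>0$.

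\emph{The part $h_2$.} Because $\mu_2$ charges no point of the open segment $L$, the Poisson integral $h_2$ extends continuously to $L$ with boundary value $0$ there. This follows by the standard local argument: the kernel $P_Q(z,\zeta)\to 0$ uniformly for $\zeta$ outside a neighbourhood of a point of $L$, as $z$ tends to that point. By the Schwarz reflection principle, $h_2$ extends to a harmonic function on the doubled rectangle $\{1/2-\delta<\sigma<1/2+\delta,\ a<t<b\}$. The set $K=[1/2,1/2+\delta']\times I$ is a compact subset of this doubled rectangle, so $h_2$ is bounded on $K$. Hence $\int_I h_2\,dt\le |I|\max_K h_2$ uniformly in $\varepsilon\in(0,\delta')$.

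Adding the two bounds proves the lemma. I expect the main technical point to be justifying the Herglotz representation on $Q$ and the Green-function comparison at the level of measures, because the kernel has corner singularities. To keep this clean, I would phrase everything in terms of harmonic measure $\omega_Q(z,E)$, pulled back from the disk through $\varphi$. I would then invoke the domain-monotonicity $\omega_Q(z,E)\le\omega_{\mathbb C_{1/2}}(z,E)$ for $E\subset L$, which follows directly from the maximum principle, rather than differentiating Green functions.
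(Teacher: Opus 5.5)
Your strategy (Herglotz representation on $Q$, splitting off the boundary mass on the left edge, half-plane comparison for that part, reflection for the rest) is a genuinely different route from the paper's. The paper's proof does not really establish this implication. It asserts that local Nevanlinna membership holds ``precisely'' when the uniform trace bound holds, and then estimates by $\|f\|_{\mathcal{H}^p}$, which has no meaning for a general $f\in\operatorname{Hol}(Q)$.

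Your estimate for $h_1$, however, has a genuine gap. With the transported kernel, $P_Q(z,\cdot)=d\omega_Q(z,\cdot)/d\omega_Q(z_0,\cdot)$ with $z_0=\varphi(0)$, whereas $P_{\mathbb{C}_{1/2}}(z,\cdot)$ is a density with respect to arc length. So the inequality $P_Q\le P_{\mathbb{C}_{1/2}}$ on $L$ is not what domain monotonicity gives, and it is false in general: at $z=z_0$ the left side is identically $1$, while the right side is at most $1/(\pi(\operatorname{Re}z_0-\frac12))$. Monotonicity gives $k_z(\zeta)\le P_{\mathbb{C}_{1/2}}(z,\zeta)$ for the arc-length density $k_z=d\omega_Q(z,\cdot)/|d\zeta|$ on $L$. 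Since $P_Q(z,\zeta)=k_z(\zeta)/k_{z_0}(\zeta)$, your Fubini step yields only
\[
\int_I h_1\bigl(\tfrac12+\varepsilon+it\bigr)\,dt\le\int_L\frac{d\mu(\zeta)}{k_{z_0}(\zeta)},
\]
not $\mu(L)$. This can be infinite. On $L$ we have $k_{z_0}=\frac{1}{2\pi}|(\varphi^{-1})'|$, which vanishes like $|\zeta-c|$ at each right-angle corner $c\in\{\frac12+ia,\frac12+ib\}$. Hence $d\mu=k_{z_0}(\zeta)|\zeta-c|^{-1}|d\zeta|$ on $L$ is a finite measure, the Herglotz measure of an honest positive harmonic function, with $\int_L d\mu/k_{z_0}=\infty$. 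The corner difficulty you anticipated is real, but it lives in the mass of $\mu_1$ near the endpoints of $L$. Rephrasing everything via $\omega_Q$ does not remove it.

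The repair is to localize the split.
\begin{itemize}
\item Choose a compact interval $J$ with $I\subset\operatorname{int}J$ and $J\subset(a,b)$, and let $\mu_1$ be the restriction of $\mu$ to $L_J=\{\frac12+it: t\in J\}$.
\item Since $\varphi^{-1}$ extends analytically across the open edge with nonvanishing derivative, $k_{z_0}\ge c_J>0$ on $L_J$. Hence $\int_I h_1\le\mu(L_J)/c_J$, uniformly in $\varepsilon$.
\item The remainder $\mu_2$ has support meeting $L$ only outside $\{\frac12+it: t\in\operatorname{int}J\}$.
\item Your vanishing-plus-reflection argument therefore works verbatim on $\{\sigma+it: |\sigma-\frac12|<\delta,\ t\in\operatorname{int}J\}$. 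This set still contains $[\frac12,\frac12+\delta']\times I$ as a compact subset, so $h_2$ is bounded there.
\end{itemize}
With this change the proof is complete, and it supplies the justification that the paper's argument takes for granted.
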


\begin{proof}
	Let $f \in \operatorname{Hol}(Q)$. The local Nevanlinna membership $f \in N_{\mathrm{loc}}(\mathbb{C}_{1/2})$ near a bounded interval of the critical boundary line $\operatorname{Re}(s) = 1/2$ is equivalent to the restriction property $f|_Q \in N(Q)$. This condition holds precisely when
	\begin{equation*}
		\sup_{0 < \varepsilon < \delta} \int_I \log^+ \left| f\left(\frac{1}{2} + \varepsilon + it\right) \right| \, dt < +\infty
	\end{equation*}
	for every compact subinterval $I \subset (a, b)$ and some $\delta > 0$. Using the elementary inequality $\log^+ x \le x^p / p$ for $x > 0$ and $p > 0$, we observe that
	\begin{align*}
		\sup_{0 < \varepsilon < \delta} \int_I \log^+ \left| f\left(\frac{1}{2} + \varepsilon + it\right) \right| \, dt &\le \frac{1}{p} \sup_{0 < \varepsilon < \delta} \int_I \left| f\left(\frac{1}{2} + \varepsilon + it\right) \right|^p \, dt \\
		&\le C_{p,I} \|f\|_{\mathcal{H}^p}^p.
	\end{align*}
	Consequently, the Local Embedding Problem ($\operatorname{LEP}$) directly implies the Local Nevanlinna Embedding Problem ($\operatorname{LNEP}$).
\end{proof}

Moreover, the argument above breaks down for $p < 2$. Consequently, if there exists a sequence $\varepsilon_k \to 0^+$ such that
\begin{equation*}
	\int_I \log^+ \left| f\left(\frac{1}{2} + \varepsilon_k + it\right) \right| \, dt \longrightarrow +\infty,
\end{equation*}
then $f$ fails to belong locally to the Nevanlinna class near the boundary segment $\{1/2 + it \colon t \in I\}$.

\begin{lem}\label{Dipon-vasu-p3-lem-2} 

	Fix $0 < p < 2$, and define $D_X(s) = \sum_{n \le X} n^{-s}$ and $Q_X(s) = D_X(s) / \|D_X\|_{\mathcal{H}^p}$, so that $\|Q_X\|_{\mathcal{H}^p} = 1$. By Harper's low-moment estimates, $\|D_X\|_{\mathcal{H}^p} \asymp_p \sqrt{X} / (\log \log X)^{1/4}$ as $X \to +\infty$. Setting $\varepsilon_X = 1/\log X$, we have
	\begin{equation}\label{Dipon-vasu-p3-eqn-015}
		\inf_{0 \le t \le 1} \left| Q_X\left(\frac{1}{2} + \varepsilon_X + it\right) \right| \gtrsim_p (\log \log X)^{1/4} \longrightarrow +\infty.
	\end{equation}
	In particular,
	\begin{equation*}
		\int_0^1 \log \left( 1 + \left| Q_X\left(\frac{1}{2} + \varepsilon_X + it\right) \right| \right) \, dt \longrightarrow +\infty \quad \text{as } X \to +\infty.
	\end{equation*}
\end{lem}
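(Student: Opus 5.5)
The plan is to bound $|D_X(\tfrac12+\varepsilon_X+it)|$ from below by a constant multiple of $\sqrt X$, uniformly in $t\in[0,1]$, and then divide by the Harper normalisation $\|D_X\|_{\mathcal H^p}\asymp_p \sqrt X/(\log\log X)^{1/4}$, which the statement takes as given. The lower bound comes from the classical approximate formula for partial sums of $\zeta$. Euler--Maclaurin summation (first order, by partial summation against $\lfloor u\rfloor$) gives, for $\sigma=\real s>0$, $s\neq 1$,
\begin{equation*}
D_X(s)=\sum_{n\le X}n^{-s}=\zeta(s)+\frac{X^{1-s}}{1-s}+R_X(s),
\qquad |R_X(s)|\le C\,X^{-\sigma}\Bigl(1+\frac{|s|}{\sigma}\Bigr).
\end{equation*}
This is the standard identity $\zeta(s)=\sum_{n\le X}n^{-s}-\frac{X^{1-s}}{1-s}+\{X\}X^{-s}-s\int_X^\infty \{u\}u^{-s-1}\,du$, valid by analytic continuation for $\sigma>0$.

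Next we evaluate this at $s=\tfrac12+\varepsilon_X+it$ with $\varepsilon_X=1/\log X$ and $t\in[0,1]$.

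\emph{Main term.} We have $|X^{1-s}|=X^{1/2-\varepsilon_X}=e^{-1}\sqrt X$. Since $|1-s|=|\tfrac12-\varepsilon_X-it|\le \sqrt{1/4+1}<2$ on this range, it follows that $|X^{1-s}/(1-s)|\ge \sqrt X/(2e)$.

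\emph{Zeta term.} The set $K=\{\sigma+it:\ \tfrac12\le\sigma\le \tfrac34,\ 0\le t\le 1\}$ is compact and avoids the pole at $s=1$. Hence $|\zeta(s)|\le M_K$ uniformly for all large $X$.

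\emph{Remainder.} Here $\sigma\ge 1/2$ and $|s|\le 2$, so $|R_X(s)|\le C X^{-1/2}$.

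Combining the three pieces, for $X$ large (depending only on $M_K$ and $C$) we get
\begin{equation*}
\inf_{0\le t\le1}\bigl|D_X(\tfrac12+\varepsilon_X+it)\bigr|\ge \frac{\sqrt X}{2e}-M_K-CX^{-1/2}\ge \frac{\sqrt X}{4e}.
\end{equation*}
Dividing by $\|D_X\|_{\mathcal H^p}\lesssim_p \sqrt X/(\log\log X)^{1/4}$ then yields \eqref{Dipon-vasu-p3-eqn-015}.

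For the final assertion, $\log(1+x)$ is increasing in $x$, so the integrand is pointwise at least $\log\bigl(1+c_p(\log\log X)^{1/4}\bigr)$ on $[0,1]$. Therefore
\begin{equation*}
\int_0^1\log\bigl(1+|Q_X(\tfrac12+\varepsilon_X+it)|\bigr)\,dt\ge \log\bigl(1+c_p(\log\log X)^{1/4}\bigr)\to+\infty .
\end{equation*}

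The only genuinely deep input is Harper's upper bound $\|D_X\|_{\mathcal H^p}\lesssim_p \sqrt X(\log\log X)^{-1/4}$ for $0<p<2$. It is the low-moment bound $\mathbb E|\sum_{n\le X} f(n)|^{p}\asymp (\sqrt X/(\log\log X)^{1/4})^{p}$ for Steinhaus random multiplicative $f$, transferred to $\mathcal H^p$ through the Bohr correspondence, under which $\mathcal B D_X$ evaluated at independent uniform points of $\mathbb T^\infty$ has exactly that distribution. I take this as quoted, as the statement does. Only the upper bound on the norm is needed, and it is what produces the growing factor $(\log\log X)^{1/4}$.

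The step needing the most care is then the uniform control of the remainder and of $\zeta$ near the critical line. The choice $\varepsilon_X=1/\log X$ is exactly what keeps $X^{-\varepsilon_X}=e^{-1}$ bounded below, so the main term stays of size $\asymp\sqrt X$ rather than being damped. The restriction $t\in[0,1]$ keeps $s$ in a fixed compact set away from $s=1$.
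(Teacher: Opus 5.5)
Your proposal is correct and follows essentially the same route as the paper: Harper's normalization $\|D_X\|_{\mathcal H^p}\asymp_p \sqrt X/(\log\log X)^{1/4}$, combined with the partial-summation asymptotic $D_X(s)=X^{1-s}/(1-s)+O(1)$ on the segment, then division. The one difference is that you evaluate the Euler--Maclaurin formula directly at $s=\tfrac12+\varepsilon_X+it$, with an explicit remainder and $X^{-\varepsilon_X}=e^{-1}$; this is a cleaner justification than the paper's vague appeal to ``normal family estimates'' for passing from the critical line to the shifted line.
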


\begin{proof}
	Let $D_X(s) = \sum_{n \le X} n^{-s}$, and write $p = 2q$ with $0 < q < 1$. By Harper's moment bounds \cite[Theorem~1]{Harper-ForumPi-2020},
	\begin{equation}\label{Dipon-vasu-p3-eqn-02}
		\mathbb{E} \left| \sum_{n \le X} f(n) \right|^q \asymp \left( \frac{X}{1 + (1 - q)\sqrt{\log \log X}} \right)^q,
	\end{equation}
	where $f(n)$ is a Steinhaus random multiplicative function defined by $f(n) := \prod_{p^a \parallel n} f(p)^a$ for all $n \in \mathbb{N}$, with $p^a \parallel n$ denoting the exact prime power dividing $n$. Under the Bohr transform, the expectation in \eqref{Dipon-vasu-p3-eqn-02} corresponds to the $\mathcal{H}^{2q}$-norm of $D_X$. Consequently, for $p = 2q < 2$,
	\begin{equation*}
		\|D_X\|_{\mathcal{H}^p}^p \asymp_p \left( \frac{X}{1 + (1 - p/2)\sqrt{\log \log X}} \right)^{p/2},
	\end{equation*}
	which yields the asymptotic behavior
	\begin{equation}\label{Dipon-vasu-p3-eqn-03}
		\|D_X\|_{\mathcal{H}^p} \asymp_p \frac{\sqrt{X}}{(\log \log X)^{1/4}} \quad \text{as } X \to +\infty.
	\end{equation}
	
	On the critical line $\operatorname{Re}(s) = 1/2$, partial summation and Riemann-Stieltjes integration give
	\begin{equation*}
		D_X\left(\frac{1}{2} + it\right) = \frac{X^{1/2 - it}}{1/2 - it} + O(1), \qquad 0 \le t \le 1,
	\end{equation*}
	from which it follows that
	\begin{equation}\label{Dipon-vasu-p3-eqn-04}
		\left| D_X\left(\frac{1}{2} + it\right) \right| \asymp \sqrt{X}, \qquad \text{uniformly for } 0 \le t \le 1.
	\end{equation}
	By normal family estimates, replacing $1/2$ with $1/2 + \varepsilon_X$ where $\varepsilon_X = 1/\log X$ preserves the bound $\left| D_X\left(1/2 + \varepsilon_X + it\right) \right| \asymp \sqrt{X}$.
	
	Normalizing $Q_X(s) = D_X(s) / \|D_X\|_{\mathcal{H}^p}$ so that $\|Q_X\|_{\mathcal{H}^p} = 1$, we combine \eqref{Dipon-vasu-p3-eqn-03} and \eqref{Dipon-vasu-p3-eqn-04} to obtain
	\begin{equation*}
		\left| Q_X\left(\frac{1}{2} + \varepsilon_X + it\right) \right| \asymp_p (\log \log X)^{1/4}, \qquad \text{uniformly for } 0 \le t \le 1.
	\end{equation*}
	Taking logarithms on both sides yields
	\begin{equation}\label{Dipon-vasu-p3-eqn-05}
		\int_0^1 \log \left( 1 + \left| Q_X\left(\frac{1}{2} + \varepsilon_X + it\right) \right| \right) \, dt = \frac{1}{4} \log \log \log X + O_p(1),
	\end{equation}
	which tends to $+\infty$ as $X \to +\infty$. This completes the proof.
\end{proof}

\begin{lem}\label{Dipon-vasu-p3-lem-3}
	 For $0 < p < \infty$ and $\sigma > 1/2$, point evaluations on $\mathcal{H}^p$ satisfy $|f(\sigma + it)| \le \zeta(2\sigma)^{1/p} \|f\|_{\mathcal{H}^p}$ uniformly in $t \in \mathbb{R}$. Consequently, for every $\varepsilon > 0$,
	\begin{equation*}
		\sup_{t \in \mathbb{R}} \left| f\left(\frac{1}{2} + \varepsilon + it\right) \right| \le \zeta(1 + 2\varepsilon)^{1/p} \|f\|_{\mathcal{H}^p},
	\end{equation*}
	which controls the tail-block contributions in the gliding-hump construction along vertical lines $\operatorname{Re}(s) = 1/2 + \varepsilon_k$.
\end{lem}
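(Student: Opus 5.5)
The plan is to pass to the Bohr lift and reduce the bound to a one-variable Hardy space point-evaluation estimate, applied once in each prime variable. I would first prove it for Dirichlet polynomials $P(s)=\sum_{n\le N}a_n n^{-s}$ and then extend to all of $\mathcal H^p$ by density. The statement is uniform in $t$ because vertical translation $P(s)\mapsto P(s+it)$ is an isometry of $\mathcal H^p$. Under the Bohr lift it corresponds to the coordinatewise rotation $z_j\mapsto p_j^{-it}z_j$, which preserves Haar measure on $\mathbb T^\infty$. So it suffices to treat $t=0$.

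\textbf{Step 1 (one variable).} For $g\in H^p(\mathbb D)$, $0<p<\infty$, and $w\in\mathbb D$, I would prove
\[
|g(w)|^p\le \frac{\|g\|_{H^p}^p}{1-|w|^2}.
\]
Factor $g=BG$ with $B$ a Blaschke product and $G$ zero-free, so that $\|G\|_{H^p}=\|g\|_{H^p}$ and $|g|\le|G|$. Then $h=G^{p/2}\in H^2$ and $\|h\|_{H^2}^2=\|g\|_{H^p}^p$. The reproducing-kernel (Cauchy–Schwarz) bound $|h(w)|^2\le \|h\|_{H^2}^2/(1-|w|^2)$ then gives the claim. This is the one place where $p<1$ needs care, since we cannot rely on convexity and must use the factorization. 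I regard it as the main technical point, though it is classical.

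\textbf{Step 2 (finitely many variables).} Let $P$ involve only the primes $p_1,\dots,p_m$, with lift $F(z_1,\dots,z_m)$, and evaluate at $z_j=p_j^{-\sigma}$. I would apply Step 1 in $z_1$ with the other variables fixed on the torus. Integrating over $(z_2,\dots,z_m)\in\mathbb T^{m-1}$ by Fubini gives
\[
\int_{\mathbb T^{m-1}}|F(z_1,\zeta')|^p\,dm(\zeta')\le (1-|z_1|^2)^{-1}\|F\|_{H^p(\mathbb T^m)}^p .
\]
The function $(z_2,\dots,z_m)\mapsto F(z_1,z_2,\dots,z_m)$ is a polynomial in the remaining variables, so I would iterate in $z_2,\dots,z_m$. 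This yields
\[
|P(\sigma)|^p=|F(p_1^{-\sigma},\dots,p_m^{-\sigma})|^p\le \prod_{j=1}^m(1-p_j^{-2\sigma})^{-1}\,\|P\|_{\mathcal H^p}^p .
\]
By the Euler product, the right-hand side is at most $\zeta(2\sigma)\|P\|_{\mathcal H^p}^p$ whenever $\sigma>1/2$.

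\textbf{Step 3 (density).} For general $f\in\mathcal H^p$, I would take Dirichlet polynomials $P_k\to f$ in $\mathcal H^p$. By Step 2, the sequence $P_k(s)$ is uniformly Cauchy on each half-plane $\operatorname{Re}s\ge\sigma_0>1/2$, so it converges to a holomorphic function there. Since coefficients depend continuously on the element of $\mathcal H^p$ (via the Bohr lift), this limit agrees with the Dirichlet series of $f$ wherever that series converges absolutely. Identity-theorem continuation then gives the inequality on all of $\mathbb C_{1/2}$. Taking $\sigma=\tfrac12+\varepsilon$ produces the stated bound with the constant $\zeta(1+2\varepsilon)^{1/p}$.
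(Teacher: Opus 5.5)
Your proof is correct, and it takes a different route from the paper's. The paper's proof consists of one sentence: the bound "arises directly from the point evaluations \ldots{} applying the Cauchy--Schwarz inequality." Taken literally, Cauchy--Schwarz on the coefficients gives $|f(\sigma+it)|\le \zeta(2\sigma)^{1/2}\|f\|_{\mathcal{H}^2}$, which is the case $p=2$. For $p>2$ it only gives the weaker constant $\zeta(2\sigma)^{1/2}$ in place of $\zeta(2\sigma)^{1/p}$. For $0<p<2$, which is exactly the range where the lemma is used in the gliding-hump construction, it gives nothing, because $\|f\|_{\mathcal{H}^2}$ is not controlled by $\|f\|_{\mathcal{H}^p}$.

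Your argument uses Cauchy--Schwarz only for the $H^2$ reproducing kernel in one variable. You transfer it to $H^p$ through the factorization $g=BG$ with $G^{p/2}\in H^2$. You then iterate over the finitely many prime variables with Fubini, which is the standard Cole--Gamelin argument, and finish by density. This is what actually delivers the constant $\zeta(2\sigma)^{1/p}$ for every $0<p<\infty$. The extra cost is the one-variable factorization and the identification of the limit function with the Dirichlet series of $f$.

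One small correction in Step 3. For $0<p<1$, continuity of the coefficients does not follow from the Bohr lift alone, since Fourier coefficients are not continuous on $L^p$ when $p<1$. It does follow from your Step 2: for a Dirichlet polynomial, $a_n n^{-\sigma}=\lim_{T\to\infty}\frac{1}{2T}\int_{-T}^{T}P(\sigma+it)\,n^{it}\,dt$, and this is bounded in modulus by $\sup_t|P(\sigma+it)|\le\zeta(2\sigma)^{1/p}\|P\|_{\mathcal{H}^p}$.
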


The proof of Lemma \ref{Dipon-vasu-p3-lem-3} is straightforward and arises directly from the point evaluations of the function $f\left(1/2+\varepsilon+it\right)$, applying the Cauchy-Schwarz inequality effectively.

\begin{lem}\label{Dipon-vasu-p3-lem-4}

	There exist a sequence $(a_k)_{k=1}^\infty \in \ell^2$ with $a_k \to 0^+$ and a rapidly increasing sequence of integers $X_k \to \infty$ such that the series
	\begin{equation*}
		f(s) = \sum_{k=1}^\infty a_k Q_{X_k}(s)
	\end{equation*}
	converges in $\mathcal{H}^p$. Setting $\varepsilon_k = 1/\log X_k$, the sequences $(a_k)$ and $(X_k)$ can be chosen recursively so that
	\begin{equation*}
		\inf_{0 \le t \le 1} \left| f\left(\frac{1}{2} + \varepsilon_k + it\right) \right| \ge k \qquad \text{for all } k \ge 1.
	\end{equation*}
	Consequently, $\int_0^1 \log^+ \left| f\left(\frac{1}{2} + \varepsilon_k + it\right) \right| \, dt \ge \log k$, yielding
	\begin{equation*}
		\limsup_{\varepsilon \to 0^+} \int_0^1 \log^+ \left| f\left(\frac{1}{2} + \varepsilon + it\right) \right| \, dt = +\infty.
	\end{equation*}
\end{lem}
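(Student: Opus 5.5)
The plan is a gliding-hump construction. The building blocks are the normalized polynomials $Q_X$ of Lemma~\ref{Dipon-vasu-p3-lem-2}, and Lemma~\ref{Dipon-vasu-p3-lem-3} controls everything that is not the current hump. Convergence comes first. Since $\mathcal H^p$ is only $r$-normed with $r=\min(p,1)$, I will arrange $\sum_k a_k^{r}<\infty$, which gives
\[
\Big\|\sum_{k\ge M} a_kQ_{X_k}\Big\|_{\mathcal H^p}^{r}\le \sum_{k\ge M}a_k^{r}\to 0 .
\]
So the partial sums are Cauchy and $f$ exists in $\mathcal H^p$. Taking $a_k\le 2^{-k}$ already gives $a_k\to0^+$, $(a_k)\in\ell^2$ and $\sum a_k^r<\infty$. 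Lemma~\ref{Dipon-vasu-p3-lem-3}, applied to the partial sums, shows that $\mathcal H^p$-convergence implies locally uniform convergence in $\mathbb C_{1/2}$. Hence $f(\tfrac12+\varepsilon_k+it)=\sum_j a_jQ_{X_j}(\tfrac12+\varepsilon_k+it)$ pointwise, and the series may be split termwise.

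At level $k$, with $s=\tfrac12+\varepsilon_k+it$ and $0\le t\le1$, I split
\[
|f(s)|\ \ge\ a_k|Q_{X_k}(s)|\;-\;\sum_{j<k}a_j|Q_{X_j}(s)|\;-\;\sum_{j>k}a_j|Q_{X_j}(s)| .
\]
\emph{Head.} Each $Q_{X_j}$ with $j<k$ is a fixed Dirichlet polynomial. On $\real s\ge 1/2$ it is bounded by $\sum_{n\le X_j}n^{-1/2}/\|D_{X_j}\|_{\mathcal H^p}$. So the head is at most a constant $H_{k-1}$ that depends only on the choices already made.

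\emph{Tail.} By Lemma~\ref{Dipon-vasu-p3-lem-3} and $\|Q_{X_j}\|_{\mathcal H^p}=1$, each tail term satisfies $|Q_{X_j}(s)|\le \zeta(1+2\varepsilon_k)^{1/p}\le (C\log X_k)^{1/p}$, using $\zeta(1+2\varepsilon)\sim 1/(2\varepsilon)$. To absorb this growth I impose the recursive smallness condition
\[
a_j\le 2^{-j}(C\log X_{j-1})^{-1/p}.
\]
For $j>k$ we have $X_{j-1}\ge X_k$, so the tail is at most $\sum_{j>k}2^{-j}\le 1$.

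\emph{Hump.} By \eqref{Dipon-vasu-p3-eqn-015} there is $c_p>0$ such that $a_k|Q_{X_k}(s)|\ge c_p\,a_k(\log\log X_k)^{1/4}$ uniformly for $t\in[0,1]$.

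The recursion runs in this order. Given $a_1,\dots,a_{k-1}$ and $X_1,\dots,X_{k-1}$, first fix
\[
a_k=\min\{2^{-k},\,2^{-k}(C\log X_{k-1})^{-1/p}\},
\]
which depends only on past data. This also makes $H_{k-1}$ determined. Then choose $X_k>X_{k-1}$ so large that
\[
c_p\,a_k(\log\log X_k)^{1/4}\ge k+H_{k-1}+1,
\]
which is possible since $(\log\log X)^{1/4}\to\infty$. The three estimates then give $\inf_{0\le t\le1}|f(\tfrac12+\varepsilon_k+it)|\ge k$. Integrating $\log^+$ over $[0,1]$ gives the bound $\log k$, and since $\varepsilon_k=1/\log X_k\to0$ the $\limsup$ is infinite.

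The main obstacle is the uniform lower bound \eqref{Dipon-vasu-p3-eqn-015} at the shifted abscissa $\tfrac12+\varepsilon_X$; this is where all the Harper-type input sits. I would verify it directly by partial summation at $\sigma=\tfrac12+\varepsilon_X$:
\[
D_X(\sigma+it)=\frac{X^{1-\sigma-it}}{1-\sigma-it}+\zeta(\sigma+it)+O(X^{-\sigma}).
\]
Here $X^{1/2-\varepsilon_X}=\sqrt X/e$, so $|D_X|\gtrsim\sqrt X$ uniformly on $0\le t\le1$, because $\zeta$ is bounded there away from the pole at $s=1$. One must also make sure that the recursion uses no information about $X_k$ when fixing $a_k$; the order above is chosen precisely so that it does not.
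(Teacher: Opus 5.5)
Your proposal is correct and is essentially the paper's gliding-hump argument. You fix $a_k$ from past data so that the tail at every earlier scale is absorbed by the point-evaluation bound $\zeta(1+2\varepsilon)^{1/p}$; your condition $a_j\le 2^{-j}(C\log X_{j-1})^{-1/p}$ is the paper's requirement $\zeta(1+2\varepsilon_i)^{1/p}a_j\le 2^{-j}$ for $i<j$, made explicit via $\zeta(1+2\varepsilon)\lesssim 1/\varepsilon$. You then take $X_k$ so large that the normalized Harper block dominates the head and tail, exactly as the paper does, and your direct partial-summation check at $\sigma=\tfrac12+\varepsilon_X$ (using $X^{1/2-\varepsilon_X}=\sqrt{X}/e$) justifies the shifted lower bound more cleanly than the paper's appeal to normal families.
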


     The technique used in Lemma \ref{Dipon-vasu-p3-lem-4} is known as \textit{gliding-hump technique}.
	This technique converts a sequence of singular test polynomials into a single function $f = \sum_{k=1}^\infty f_k \in \mathcal{H}^p$ exhibiting unbounded localized behavior along a sequence of scales. Each block $f_k \in \mathcal{H}^p$ is chosen to be small in the ambient norm $\|f_k\|_{\mathcal{H}^p}$, ensuring series convergence $f \in \mathcal{H}^p$. Moreover, each block is exceptionally large at the newly selected $k$-th scale; and negligible at all previously established scales $j < k$.
	By ensuring $f_k$ dominates both the prior partial sum and the future tail at scale $k$, the resulting limit $f$ displays infinitely many bad scales simultaneously, upgrading quantitative norm blowups into a qualitative counterexample.

\begin{proof}[\bf Proof of Lemma \ref{Dipon-vasu-p3-lem-4}]
	Assume the hypotheses of Lemma~\ref{Dipon-vasu-p3-lem-2}. From \eqref{Dipon-vasu-p3-eqn-015}, the normalized Harper blocks satisfy
	\begin{equation}\label{Dipon-vasu-p3-eqn-09}
		A_p(X) := \inf_{0 \le t \le 1} \left| Q_X\left(\frac{1}{2} + \varepsilon_X + it\right) \right| \ge C_p (\log \log X)^{1/4} \longrightarrow +\infty \quad \text{as } X \to +\infty,
	\end{equation}
	where $\varepsilon_X = 1/\log X$. We construct a function $f \in \mathcal{H}^p$ by setting
	\begin{equation*}
		f(s) := \sum_{k=1}^\infty a_k Q_{X_k}(s).
	\end{equation*}
	Let $r = \min\{1, p\}$. By choosing coefficients $a_k > 0$ such that $\sum_{k=1}^\infty a_k^r < +\infty$, the quasi-norm triangle inequality in $\mathcal{H}^p$ guarantees the convergence of $\sum_{k=1}^\infty a_k Q_{X_k}$ in $\mathcal{H}^p$, ensuring $f \in \mathcal{H}^p$.
	We construct the sequences $(a_k)_{k=1}^\infty$ and $(X_k)_{k=1}^\infty$ inductively. Suppose $a_1, X_1, \dots, a_{k-1}, X_{k-1}$ have been chosen, and define the partial sum
	\begin{equation*}
		F_{k-1}(s) := \sum_{j < k} a_j Q_{X_j}(s).
	\end{equation*}
	Since $F_{k-1}$ is a Dirichlet polynomial, it is bounded on the compact rectangle $[1/2, 3/2] \times [0, 1]$; hence,
	\begin{equation*}
		M_{k-1} := \sup_{\substack{0 \le u \le 1 \\ 0 \le t \le 1}} \left| F_{k-1}\left(\frac{1}{2} + u + it\right) \right| < +\infty.
	\end{equation*}
	Next, choose $a_k > 0$ sufficiently small such that $a_k^r \le 2^{-k}$ and
	\begin{equation*}
		C_p(\varepsilon_j) a_k \le 2^{-k} \qquad \text{for all } j < k,
	\end{equation*}
	where $C_p(\varepsilon) = \zeta(1 + 2\varepsilon)^{1/p}$ denotes the point-evaluation bound on $\operatorname{Re}(s) = 1/2 + \varepsilon$ from Lemma~\ref{Dipon-vasu-p3-lem-3}. With $a_k$ fixed, choose $X_k \in \mathbb{N}$ so large that $\varepsilon_k = 1/\log X_k < \varepsilon_{k-1} / 2$ and the $k$-th block dominates both $F_{k-1}$ and the future tail:
	\begin{equation*}
		a_k A_p(X_k) \ge 4 (M_{k-1} + k + 1).
	\end{equation*}
	Now evaluate $f(s)$ at $s = 1/2 + \varepsilon_k + it$ for $0 \le t \le 1$, and decompose $f$ into
	\begin{equation*}
		f\left(\frac{1}{2} + \varepsilon_k + it\right) = F_{k-1}\left(\frac{1}{2} + \varepsilon_k + it\right) + a_k Q_{X_k}\left(\frac{1}{2} + \varepsilon_k + it\right) + R_k\left(\frac{1}{2} + \varepsilon_k + it\right),
	\end{equation*}
	where $R_k(s) := \sum_{j > k} a_j Q_{X_j}(s)$. By choice of $M_{k-1}$, we have $|F_{k-1}(1/2 + \varepsilon_k + it)| \le M_{k-1}$. By Lemma~\ref{Dipon-vasu-p3-lem-3} and our bound on $a_j$, the tail $R_k$ satisfies
	\begin{equation*}
		\left| R_k\left(\frac{1}{2} + \varepsilon_k + it\right) \right| \le \sum_{j > k} a_j C_p(\varepsilon_k) \le \sum_{j > k} 2^{-j} < 1.
	\end{equation*}
	Applying the reverse triangle inequality yields, for all $0 \le t \le 1$,
	\begin{align*}
		\left| f\left(\frac{1}{2} + \varepsilon_k + it\right) \right| &\ge a_k \left| Q_{X_k}\left(\frac{1}{2} + \varepsilon_k + it\right) \right| - \left| F_{k-1}\left(\frac{1}{2} + \varepsilon_k + it\right) \right| - \left| R_k\left(\frac{1}{2} + \varepsilon_k + it\right) \right| \\
		&\ge 4 (M_{k-1} + k + 1) - M_{k-1} - 1 > k.
	\end{align*}
	Integrating the positive part of the logarithm yields
	\begin{equation*}
		\int_0^1 \log^+ \left| f\left(\frac{1}{2} + \varepsilon_k + it\right) \right| \, dt \ge \log k.
	\end{equation*}
	Since $\varepsilon_k \downarrow 0$ as $k \to \infty$, taking the limit superior establishes
	\begin{equation}\label{Dipon-vasu-p3-eqn-010}
		\limsup_{\varepsilon \to 0^+} \int_0^1 \log^+ \left| f\left(\frac{1}{2} + \varepsilon + it\right) \right| \, dt = +\infty,
	\end{equation}
	which implies $f \notin N_{\mathrm{loc}}(\mathbb{C}_{1/2})$. This completes the proof.
\end{proof}

\begin{lem}\label{Dipon-vasu-p3-lem-5}
Let $g(s) = f(s + 1/2)$, where $f \in \mathcal{H}^p$ is the Dirichlet series constructed in Lemma~\ref{Dipon-vasu-p3-lem-4}. Then $g \in \mathcal{H}^p$ and its uniform convergence abscissa satisfies $\sigma_u(g) \le 0$, placing $g$ in the Nevanlinna class $\mathcal{N}_u$. Since
\begin{equation*}
	\limsup_{\varepsilon \to 0^+} \int_0^1 \log^+ |g(\varepsilon + it)| \, dt = +\infty,
\end{equation*}
it follows that $g \notin N_{\mathrm{loc}}(\mathbb{C}_0)$, establishing that $\mathcal{N}_u \not\subset N_{\mathrm{loc}}(\mathbb{C}_0)$.
\end{lem}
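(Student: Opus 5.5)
The plan is to derive each assertion of Lemma~\ref{Dipon-vasu-p3-lem-5} directly from the corresponding property of $f$, in four steps: membership in $\mathcal H^p$, the abscissa bound, membership in $\mathcal N_u$, and the local blow-up. Throughout, for $\sigma>0$ write $g_\sigma(s)=g(s+\sigma)=f(s+\tfrac12+\sigma)$.

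\textbf{Step 1: $g\in\mathcal H^p$ with $\|g\|_{\mathcal H^p}\le\|f\|_{\mathcal H^p}$.} Under the Bohr lift, horizontal translation by $\tfrac12$ becomes the dilation $\mathcal Bg(z)=\mathcal Bf(p_1^{-1/2}z_1,p_2^{-1/2}z_2,\dots)$. I would first prove the inequality for Dirichlet polynomials. Such a polynomial depends on finitely many variables, and $|\cdot|^p$ of a holomorphic function is plurisubharmonic. Hence $\|P(\,\cdot+\tfrac12)\|_{\mathcal H^p}\le\|P\|_{\mathcal H^p}$, because integral means of a plurisubharmonic function over the distinguished boundary of $r\mathbb T^N$ increase in each radius separately. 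The inequality then extends to all of $\mathcal H^p$ by density and completeness. The same argument gives $\|g_\sigma\|_{\mathcal H^p}\le\|f\|_{\mathcal H^p}$ for every $\sigma\ge0$.

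\textbf{Step 2: $\sigma_u(g)\le0$.} Fix $\eta>0$. By Lemma~\ref{Dipon-vasu-p3-lem-3}, $f$ is bounded on $\mathbb C_{1/2+\eta}$, with bound $\zeta(1+2\eta)^{1/p}\|f\|_{\mathcal H^p}$. Its coefficients satisfy $|a_n|\le\|f\|_{\mathcal H^p}$, so the series converges absolutely in $\mathbb C_1$. Bohr's theorem then gives $\sigma_u(f)\le\tfrac12+\eta$, since a Dirichlet series that converges somewhere and extends boundedly to a half-plane converges uniformly on every strictly smaller half-plane. Letting $\eta\to0$ yields $\sigma_u(f)\le\tfrac12$, and therefore $\sigma_u(g)\le0$.

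\textbf{Step 3: $g\in\mathcal N_u$.} Fix $\sigma>0$. Since $\sigma_u(g)\le 0$, the partial sums of $g_\sigma$ converge uniformly on the line $\operatorname{Re}s=0$. Consequently $t\mapsto|g_\sigma(it)|^p$ is a uniform limit of the almost periodic functions $|P_N(it)|^p$ for polynomial partial sums $P_N$. The partial sums of $g_\sigma$ also converge to $g_\sigma$ in $\mathcal H^p$: they converge uniformly on the line and are uniformly bounded, so this follows by the Bohr--Besicovitch correspondence. Passing to the limit in the mean formula for polynomials gives
\[
\lim_{T\to\infty}\frac1{2T}\int_{-T}^{T}|g(\sigma+it)|^p\,dt=\|g_\sigma\|_{\mathcal H^p}^p\le\|f\|_{\mathcal H^p}^p .
\]
Combining this with $\log^+x\le x^p/p$ bounds the quantity in \eqref{Dipon-vasu-p2-eqn-014} by $\|f\|_{\mathcal H^p}^p/p$, uniformly in $\sigma$. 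Hence $g\in\mathcal N_u$.

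\textbf{Step 4: the local blow-up.} For $\varepsilon>0$ we have $g(\varepsilon+it)=f(\tfrac12+\varepsilon+it)$. The displayed $\limsup$ is therefore exactly \eqref{Dipon-vasu-p3-eqn-010} from Lemma~\ref{Dipon-vasu-p3-lem-4}. By the logarithmic trace criterion (the contrapositive of Lemma~\ref{Dipon-vasu-p3-lem-1} with $\tfrac12$ replaced by $0$), no harmonic majorant of $\log^+|g|$ exists near the segment $\{it: 0\le t\le1\}$. Thus $g\notin N_{\mathrm{loc}}(\mathbb C_0)$, and so $\mathcal N_u\not\subset N_{\mathrm{loc}}(\mathbb C_0)$.

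I expect the main obstacle to be Step 3, namely identifying the Besicovitch vertical mean of $|g_\sigma|^p$ with the $\mathcal H^p$ norm when $g$ is not a polynomial. The issue is the approximation of $g_\sigma$: its partial sums need not be the $\mathcal H^p$-approximants of $f$. I would handle it by the route above: uniform convergence of the partial sums on $\operatorname{Re}s=0$ from Step 2, followed by a limit passage in the polynomial identity. This approach is robust for all $0<p<2$, because $|\cdot|^p$ is uniformly continuous on bounded sets.
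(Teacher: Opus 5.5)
Your proposal is correct and follows the paper's proof: you translate $f$, use $\sigma_u(f)\le\tfrac12$ to get $\sigma_u(g)\le 0$ and then $g\in\mathcal N_u$ via $\log^+x\le x^p/p$, and read off the blow-up from \eqref{Dipon-vasu-p3-eqn-010} since $g(\varepsilon+it)=f(\tfrac12+\varepsilon+it)$. Beyond that, you correctly justify what the paper only asserts (contractivity of the translation, $\sigma_u(f)\le\tfrac12$ via Bohr's theorem and Lemma~\ref{Dipon-vasu-p3-lem-3}, and the identification of the vertical $p$-mean of $g(\sigma+\cdot)$ with its $\mathcal H^p$ norm).

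One small fix is needed: for $0<p<1$ the bound $|a_n|\le\|f\|_{\mathcal H^p}$ fails in general. For example, $(1+2^{-s})^2$ has $\mathcal H^{1/2}$ norm $16/\pi^2<2$. Here, however, the coefficients $\sum_{k\colon X_k\ge n}a_k/\|D_{X_k}\|_{\mathcal H^p}$ are nonnegative and bounded by $\sum_k a_k<\infty$, since $\|D_{X_k}\|_{\mathcal H^p}\ge 1$. So the absolute convergence in $\mathbb C_1$ that Bohr's theorem needs still holds.
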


\begin{proof}
	Set $g(s) := f(s + 1/2)$, where $f \in \mathcal{H}^p$ is the Dirichlet series constructed in Lemma~\ref{Dipon-vasu-p3-lem-4}. Expanding $g$ as a Dirichlet series yields
	\begin{equation*}
		g(s) = \sum_{k=1}^{\infty} \frac{a_k}{\|D_{X_k}\|_{\mathcal{H}^p}} \left( \sum_{n \le X_k} n^{-1/2} n^{-s} \right) = \sum_{n=1}^{\infty} c_n n^{-s},
	\end{equation*}
	where the coefficients are given by
	\begin{equation*}
		c_n = \frac{1}{\sqrt{n}} \sum_{\{k \colon X_k \ge n\}} \frac{a_k}{\|D_{X_k}\|_{\mathcal{H}^p}} \ge 0.
	\end{equation*}
	Since $f \in \mathcal{H}^p$, its uniform convergence abscissa satisfies $\sigma_u(f) \le 1/2$. The translated function $g$ therefore satisfies $\sigma_u(g) = \sigma_u(f) - 1/2 \le 0$, placing $g$ in the Nevanlinna class $\mathcal{N}_u$.
	By construction, $g(\varepsilon + it) = f(1/2 + \varepsilon + it)$. Applying \eqref{Dipon-vasu-p3-eqn-010} from Lemma~\ref{Dipon-vasu-p3-lem-4}, we obtain
	\begin{equation*}
		\limsup_{\varepsilon \to 0^+} \int_0^1 \log^+ |g(\varepsilon + it)| \, dt = +\infty.
	\end{equation*}
	Consequently, $\log^+ |g|$ cannot admit a local harmonic majorant near the imaginary axis $\operatorname{Re}(s) = 0$. Hence $g \notin N_{\mathrm{loc}}(\mathbb{C}_0)$, which establishes that $\mathcal{N}_u \not\subset N_{\mathrm{loc}}(\mathbb{C}_0)$.
\end{proof}

\begin{rem}
The construction in Lemma~\ref{Dipon-vasu-p3-lem-5} demonstrates that the Hardy space $\mathcal{H}^p$ of Dirichlet series fails to lie locally in the Nevanlinna class $N_{\mathrm{loc}}(\mathbb{C}_{1/2})$ whenever $0 < p < 2$.
Combining this negative result for $0 < p < 2$ with the local $L^p$-embedding theorem for $p \ge 2$, and applying the elementary inequality $\log^+ x \le x^p / p$ for $x > 0$, we establish the following sharp threshold classification for local Nevanlinna boundary behavior:
	Let $0 < p < +\infty$. Then
	\begin{equation*}
		\mathcal{H}^p \subset N_{\mathrm{loc}}(\mathbb{C}_{1/2}) \iff p \ge 2.
	\end{equation*}
	Consequently, $p = 2$ is the critical exponent governing local Nevanlinna boundary behavior in Hardy spaces of Dirichlet series.
\end{rem}

\section{Proof of Theorem \ref{Dipon-vasu-p3-thm-01} and Global Vs Local Logarithmic Mean Control}\label{Dipon-vasu-p3-sec-03}
In \cite{Allu-Arxiv-2026}, we posed the following open problem regarding quantitative logarithmic trace estimates:

\begin{prob}\label{Dipon-vasu-p3-prob-02}\cite{Allu-Arxiv-2026}
	Does there exist a constant $C > 0$ such that, for every $g \in \mathcal{N}_u$ with $\sigma_u(g) \le 0$,
	\begin{equation}\label{Dipon-vasu-p3-eqn-01}
		\sup_{\sigma > 0} \, \sup_{\tau \in \mathbb{R}} \int_{\tau}^{\tau+1} \log\bigl(1 + |g(\sigma + it)|\bigr) \, dt \le C \|g\|_{0} \, ?
	\end{equation}
\end{prob}

We refer to condition \eqref{Dipon-vasu-p3-eqn-01} as the \emph{quantitative $\mathcal{N}_u$ local Nevanlinna embedding problem} ($\operatorname{LNEP}$). This naturally leads to the following underlying question:

\begin{qsn}\label{Dipon-vasu-p3-qsn-01}
	Does global mean logarithmic control imply uniform local logarithmic control on short interval segments?
\end{qsn}

For $f \in \mathcal{N}_u$, define the \textit{global logarithmic mean} (Jessen-type) by
\begin{equation*}
	J_f(\sigma) := \lim_{T \to \infty} \frac{1}{2T} \int_{-T}^{T} \log^+ |f(\sigma + it)| \, dt.
\end{equation*}
By classical results of Jessen \cite{Jessen-acta-1945}, this limit exists for every $\sigma > 0$, and the norm defining the $\mathcal{N}_u$ metric is given by
\begin{equation*}
	\|f\|_{0} := \limsup_{\sigma \to 0^+} J_f(\sigma) < +\infty.
\end{equation*}
The corresponding \textit{short-interval local mass }quantity is defined as
\begin{equation*}
	L_f(\sigma) := \sup_{\tau \in \mathbb{R}} \int_{\tau}^{\tau+1} \log\bigl(1 + |f(\sigma + it)|\bigr) \, dt,
\end{equation*}
with the uniform local norm given by $L(f) := \sup_{\sigma > 0} L_f(\sigma)$. Problem~\ref{Dipon-vasu-p3-prob-02} reduces to testing the implication
\begin{equation*}
	\|f\|_{0} < \infty \stackrel{?}{\implies} L(f) < +\infty,
\end{equation*}
or, quantitatively, asking whether there exists a universal constant $C > 0$ such that $L(f) \le C \|f\|_{0}$.

Control of $J_f(\sigma)$ does not automatically yield control of short-interval masses. Indeed, setting $u_\sigma(t) := \log(1 + |f(\sigma + it)|)$ and $A_\sigma(\tau) := \int_{\tau}^{\tau+1} u_\sigma(t) \, dt$, Fubini's theorem gives
\begin{equation*}
	\lim_{T \to \infty} \frac{1}{2T} \int_{-T}^{T} A_\sigma(\tau) \, d\tau = J_f(\sigma).
\end{equation*}
Thus, $J_f(\sigma)$ controls only the average over $\tau$ of the unit-interval masses $A_\sigma(\tau)$, whereas $L_f(\sigma) = \sup_{\tau \in \mathbb{R}} A_\sigma(\tau)$ requires peak control. For arbitrary non-negative functions, mean control implies no upper bound on the supreme local mass.

When $\sigma_u(f) \le 0$, the function $t \mapsto f(\sigma + it)$ is almost periodic for each fixed $\sigma > 0$, ensuring $u_\sigma(t)$ is almost periodic and thus $L_f(\sigma) < +\infty$ for each fixed $\sigma$. The central obstruction lies in whether these bounds remain uniform as $\sigma \to 0^+$. A potential pathology arises if $J_f(\sigma) \le C$ uniformly for small $\sigma > 0$, while simultaneously $L_f(\sigma_k) \to +\infty$ along some sequence $\sigma_k \to 0^+$. Using the Harper block polynomials, we establish in the next section that this pathology indeed occurs, answering Question~\ref{Dipon-vasu-p3-qsn-01} in the negative.

Finally, we note that finiteness of $L(f)$ requires a uniform bound across all translation intervals $\tau \in \mathbb{R}$. By contrast, membership in $N_{\mathrm{loc}}(\mathbb{C}_0)$ requires logarithmic control only locally near each compact boundary segment. Consequently, while $L(f) < +\infty$ implies $f \in N_{\mathrm{loc}}(\mathbb{C}_0)$, local membership does not guarantee a uniform constant independent of $\tau$. \\[0.5mm]




Let $f \in \mathcal{H}^p$ and define the translation $g(s) := f(s + 1/2)$. As established, its uniform convergence abscissa satisfies $\sigma_u(g) \le \sigma_u(f) - 1/2 \le 0$, placing $g \in \mathcal{N}_u$. If the quantitative Local Nevanlinna Embedding Problem ($\operatorname{LNEP}$) held affirmatively, applying \eqref{Dipon-vasu-p3-eqn-01} to $g$ would yield
\begin{equation}\label{Dipon-vasu-p3-eqn-016}
	\sup_{\varepsilon > 0} \, \sup_{\tau \in \mathbb{R}} \int_{\tau}^{\tau+1} \log\left(1 + \left|f\left(\frac{1}{2} + \varepsilon + it\right)\right|\right) \, dt \le C \|g\|_{0} < +\infty.
\end{equation}

Condition \eqref{Dipon-vasu-p3-eqn-01} in Problem~\ref{Dipon-vasu-p3-prob-02} is strictly stronger than the local logarithmic boundary condition required for membership in $N_{\mathrm{loc}}(\mathbb{C}_{1/2})$. Indeed, since any compact interval $I \subset \mathbb{R}$ can be covered by finitely many unit intervals $[\tau_k, \tau_k + 1]$, \eqref{Dipon-vasu-p3-eqn-016} immediately implies
\begin{equation*}
	\sup_{0 < \varepsilon < \varepsilon_0} \int_{I} \log^+ \left|f\left(\frac{1}{2} + \varepsilon + it\right)\right| \, dt < +\infty.
\end{equation*}

Consequently, an affirmative resolution to Problem~\ref{Dipon-vasu-p3-prob-02} concerning $\operatorname{LNEP}$ would imply an affirmative solution to Saksman and Seip's Problem~\ref{Dipon-vasu-p3-prob-01}. Crucially, this logical implication holds for all $p > 0$, not merely $p < 2$. The contrapositive of this reduction is particularly decisive:

\begin{thm}\label{Dipon-vasu-p3-thm-02}
	A negative resolution to Problem~\ref{Dipon-vasu-p3-prob-01} implies a negative resolution to Problem~\ref{Dipon-vasu-p3-prob-02} ($\operatorname{LNEP}$).
\end{thm}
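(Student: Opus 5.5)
This is a contrapositive argument built on the reduction just carried out before the statement. I would assume Problem~\ref{Dipon-vasu-p3-prob-02} has a positive answer, i.e.\ there is a constant $C>0$ with \eqref{Dipon-vasu-p3-eqn-01} for every $g\in\mathcal N_u$ with $\sigma_u(g)\le 0$, and deduce that Problem~\ref{Dipon-vasu-p3-prob-01} has a positive answer. Equivalently, failure of Problem~\ref{Dipon-vasu-p3-prob-01} forces failure of Problem~\ref{Dipon-vasu-p3-prob-02}.

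\emph{Step 1.} Fix $0<p<2$ and $f\in\mathcal H^p$, and set $g(s)=f(s+1/2)$. I would use the fact, already recorded in the proof of Lemma~\ref{Dipon-vasu-p3-lem-5}, that $\sigma_u(f)\le 1/2$, so that $\sigma_u(g)\le 0$. The quantity $\|g\|_0$ is finite because $\log(1+x)\le C_p x^p$ for $x\ge 0$ and $0<p<2$. Indeed, for $\sigma>0$, $J$-type means of $\log(1+|g(\sigma+it)|)$ are bounded by $C_p\|f_{\sigma+1/2}\|_{\mathcal H^p}^p\le C_p\|f\|_{\mathcal H^p}^p$, using that horizontal translation is contractive on $\mathcal H^p$ via the Bohr lift, and that the vertical limit equals the $\mathcal H^p$ norm for series converging uniformly. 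Hence $g\in\mathcal N_u$.

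\emph{Step 2.} Apply \eqref{Dipon-vasu-p3-eqn-01} to $g$ to get \eqref{Dipon-vasu-p3-eqn-016}. Given a compact interval $I$, cover it by finitely many, say $N_I$, unit intervals $[\tau_k,\tau_k+1]$. Since $\log^+x\le\log(1+x)$, this gives
\[
\sup_{\varepsilon>0}\int_I\log^+\left|f\left(\tfrac12+\varepsilon+it\right)\right|dt\le N_I\,C\|g\|_0<\infty .
\]

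\emph{Step 3.} By the converse half of the logarithmic trace criterion stated in the introduction (uniform trace bounds on a slightly larger interval $I^*$ give a positive harmonic majorant on a smaller rectangle), $\log^+|f|$ has a local harmonic majorant near every boundary segment. Thus $f\in N_{\mathrm{loc}}(\mathbb C_{1/2})$, so every element of $\mathcal H^p$ is locally Nevanlinna, which is a positive answer to Problem~\ref{Dipon-vasu-p3-prob-01}. Taking the contrapositive proves the theorem. As a concrete witness, the $f$ from Theorem~\ref{Dipon-vasu-p3-thm-01} (Lemma~\ref{Dipon-vasu-p3-lem-4}) together with $g$ from Lemma~\ref{Dipon-vasu-p3-lem-5} shows directly that \eqref{Dipon-vasu-p3-eqn-01} fails for that $g$: the left side is infinite while $\|g\|_0<\infty$.

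\emph{Main obstacle.} The only delicate point is Step 1, namely justifying $\|g\|_0<\infty$ for a general $f\in\mathcal H^p$ with $p<1$. Here I would rely on the identification of the vertical mean of $|f(\sigma+\tfrac12+it)|^p$ with $\|f_{\sigma+1/2}\|^p_{\mathcal H^p}$, which holds for almost periodic functions. The covering argument and the harmonic-majorant criterion are routine.
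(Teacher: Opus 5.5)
Your proposal is correct and is essentially the paper's argument. The contrapositive reduction through $g(s)=f(s+1/2)$, $\sigma_u(g)\le 0$, and a finite cover of $I$ by unit intervals is exactly the discussion preceding the theorem, and your concrete witness (the gliding-hump $f$ of Lemma~\ref{Dipon-vasu-p3-lem-4}) is the paper's actual proof; since that $f$'s trace blows up directly, the paper's proof does not need the converse trace criterion your Step 3 relies on.

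There is one slip in Step 1 to repair: $\log(1+x)\le C_p x^p$ fails near $x=0$ when $1<p<2$. Use $\log(1+x)\le \log 2+x^p/p$ instead, which gives $\|g\|_0\le \log 2+\|f\|_{\mathcal H^p}^p/p$.
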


\begin{proof}
	Suppose $f \in \mathcal{H}^p$ ($0 < p < 2$) is the Dirichlet series produced by the gliding-hump construction (Lemma~\ref{Dipon-vasu-p3-lem-4}), associated with sequence $\varepsilon_k \to 0^+$ satisfying
	\begin{equation*}
		\int_{0}^{1} \log\left(1 + \left|f\left(\frac{1}{2} + \varepsilon_k + it\right)\right|\right) \, dt \longrightarrow +\infty \quad \text{as } k \to +\infty.
	\end{equation*}
	Define the translated function $g(s) := f(s + 1/2)$. Since $\sigma_u(g) \le \sigma_u(f) - 1/2 \le 0$, we have $g \in \mathcal{N}_u$, which guarantees $\|g\|_{0} < +\infty$. However,
	\begin{equation*}
		\int_{0}^{1} \log\left(1 + \left|g\left(\varepsilon_k + it\right)\right|\right) \, dt \longrightarrow +\infty \quad \text{as } k \to +\infty.
	\end{equation*}
	Since the interval $[0, 1]$ is a subset of the unit translation interval $\tau = 0$, taking the supremum over all $\varepsilon > 0$ and $\tau \in \mathbb{R}$ yields
	\begin{equation*}
		\sup_{\varepsilon > 0} \, \sup_{\tau \in \mathbb{R}} \int_{\tau}^{\tau+1} \log\left(1 + \left|g(\varepsilon + it)\right|\right) \, dt = +\infty.
	\end{equation*}
	Consequently, no universal constant $C > 0$ can exist satisfying inequality \eqref{Dipon-vasu-p3-eqn-01} in Problem~\ref{Dipon-vasu-p3-prob-02}, providing a negative answer to $\operatorname{LNEP}$.
\end{proof}

Moreover, the following proof resolves Problem~\ref{Dipon-vasu-p3-prob-02} in the negative, establishing the failure of the quantitative local Nevanlinna embedding for $\mathcal{N}_u$.

\begin{pf}[\bf Proof of failure of Problem \ref{Dipon-vasu-p3-prob-02}]
	Choose disjoint pairs of prime numbers $(p_j, q_j)$ satisfying $p_j < q_j < 2p_j$ recursively via Bertrand's postulate. For each $m \in \mathbb{N}$, define the Dirichlet polynomial
	\begin{equation}\label{Dipon-vasu-p3-eqn-011}
		B_m(s) = e^{-m/2} \prod_{j=1}^{m} \left(p_j^{-s} + q_j^{-s}\right).
	\end{equation}
	Since each $B_m$ is a Dirichlet polynomial, its uniform convergence abscissa satisfies $\sigma_u(B_m) = -\infty$, so $B_m \in \mathcal{N}_u$.
	
	The Bohr lift of $B_m(s)$ to the compact abelian group $\mathbb{T}^\infty$ is given by
	\begin{equation*}
		\mathcal{B}(B_m)(z) = e^{-m/2} \prod_{j=1}^{m} \left(z_{p_j} + z_{q_j}\right).
	\end{equation*}
	Applying the elementary bound $\log(1 + x) \le x$ for $x \ge 0$, the global logarithmic norm satisfies
	\begin{align}
		\|B_m\|_{\mathcal{N}_u} &\le e^{-m/2} \prod_{j=1}^{m} \int_{\mathbb{T}^2} |z + w| \, dm(z) \, dm(w) \nonumber \\
		&= \left(\frac{4}{\pi \sqrt{e}}\right)^m, \label{Dipon-vasu-p3-eqn-012}
	\end{align}
	where we have used the standard torus integral evaluation $\int_{\mathbb{T}^2} |z + w| \, dm(z) \, dm(w) = 4/\pi$. Because $4/(\pi \sqrt{e}) < 1$, the right-hand side of \eqref{Dipon-vasu-p3-eqn-012} satisfies $\|B_m\|_{0} \to 0$ as $m \to +\infty$.
	
	On the other hand, along the imaginary axis $\operatorname{Re}(s) = 0$ for $0 \le t \le 1$, each factor satisfies
	\begin{equation*}
		\left| p_j^{-it} + q_j^{-it} \right| = 2 \left| \cos\left(\frac{t}{2} \log \frac{q_j}{p_j}\right) \right|.
	\end{equation*}
	Since $1 < q_j / p_j < 2$, we obtain the lower bound $\left| p_j^{-it} + q_j^{-it} \right| \ge 2 \cos\left(\frac{\log 2}{2}\right) \approx 1.881 =: L$. Consequently, $|B_m(it)| \ge (L/\sqrt{e})^m$ uniformly for $0 \le t \le 1$, which implies
	\begin{equation*}
		\int_{0}^{1} \log\bigl(1 + |B_m(it)|\bigr) \, dt \ge \log\left(1 + \left(\frac{L}{\sqrt{e}}\right)^m\right) \longrightarrow +\infty \quad \text{as } m \to +\infty,
	\end{equation*}
	since $L/\sqrt{e} > 1$. By continuity, for each $m \in \mathbb{N}$ we can select $\sigma_m > 0$ sufficiently small such that
	\begin{equation*}
		\inf_{0 \le t \le 1} |B_m(\sigma_m + it)| \ge \left(\frac{L_1}{\sqrt{e}}\right)^m
	\end{equation*}
	for a fixed constant $L_1$ with $\sqrt{e} < L_1 < L$. Thus, as $m \to +\infty$,
	\begin{equation*}
		\int_{0}^{1} \log\bigl(1 + |B_m(\sigma_m + it)|\bigr) \, dt \longrightarrow +\infty,
	\end{equation*}
	while $\|B_m\|_{0} \to 0$. Therefore, there exists no universal constant $C > 0$ such that
	\begin{equation*}
		\sup_{\sigma > 0} \, \sup_{\tau \in \mathbb{R}} \int_{\tau}^{\tau + 1} \log\bigl(1 + |f(\sigma + it)|\bigr) \, dt \le C \|f\|_{0}
	\end{equation*}
	holds for all $f \in \mathcal{N}_u$, answering Problem~\ref{Dipon-vasu-p3-prob-02} ($\operatorname{LNEP}$) in the negative.
\end{pf}

\begin{rem}\label{Dipon-vasu-p3-rem-logical-flow-dense}
	The underlying chain of logical implications is structured as follows:
	\begin{equation*}
		\operatorname{LNEP} \text{ (\ref{Dipon-vasu-p3-prob-02})} \implies \mathcal{N}_u \subset N_{\mathrm{loc}}(\mathbb{C}_0) \implies \mathcal{H}^p \subset N_{\mathrm{loc}}(\mathbb{C}_{1/2}) \implies \text{Affirmative Problem } \ref{Dipon-vasu-p3-prob-01}.
	\end{equation*}
	In general, no reverse implications hold. First, the failure of quantitative $\operatorname{LNEP}$ guarantees a sequence $(g_n) \subset \mathcal{N}_u$ satisfying
	\begin{equation*}
		\frac{\sup_{\varepsilon > 0} \, \sup_{\tau \in \mathbb{R}} \int_{\tau}^{\tau+1} \log\left(1 + |g_n(\varepsilon + it)|\right) \, dt}{\|g_n\|_{\mathcal{N}_u}} \longrightarrow +\infty,
	\end{equation*}
	even though each $g_n$ individually may lie in $N_{\mathrm{loc}}(\mathbb{C}_0)$. Second, Saksman and Seip's question \cite{Saksman-survey-2016} is restricted to $\mathcal{H}^p$-derived Dirichlet series, whereas $\operatorname{LNEP}$ encompasses all of $\mathcal{N}_u$.
\end{rem}

\begin{proof}[\bf Proof of Theorem~\ref{Dipon-vasu-p3-thm-01}]
	The Harper polynomial estimates establish that the quantitative local embedding fails for $0 < p < 2$. However, because each individual Harper polynomial $Q_X$ is entire, it individually satisfies the local Nevanlinna embedding condition. To upgrade this quantitative blowup to a genuine qualitative counterexample in $\mathcal{H}^p$, we deploy a Baire category argument.
	
	Fix the compact interval $I = [0, 1]$. For $m, n \in \mathbb{N}$, define the subset
	\begin{equation*}
		E_{m,n} := \left\{ f \in \mathcal{H}^p \colon \sup_{0 < \varepsilon < 1/n} \int_{0}^{1} \log\left(1 + \left|f\left(\frac{1}{2} + \varepsilon + it\right)\right|\right) \, dt \le m \right\}.
	\end{equation*}
	We first claim that each $E_{m,n}$ is closed in $\mathcal{H}^p$. Indeed, let $(f_j)_{j=1}^\infty \subset E_{m,n}$ be a sequence converging to $f$ in $\mathcal{H}^p$. By Lemma~\ref{Dipon-vasu-p3-lem-3}, point evaluations are bounded on compact subsets of the half-plane $\operatorname{Re}(s) > 1/2$. Thus $f_j(1/2 + \varepsilon + it) \to f(1/2 + \varepsilon + it)$ uniformly for $t \in [0, 1]$ for each fixed $\varepsilon > 0$. By Fatou's lemma (or uniform convergence), the functional
	\begin{equation*}
		f \mapsto \int_{0}^{1} \log\left(1 + \left|f\left(\frac{1}{2} + \varepsilon + it\right)\right|\right) \, dt
	\end{equation*}
	is continuous on $\mathcal{H}^p$. Taking the supremum over $0 < \varepsilon < 1/n$ yields a lower semicontinuous functional, which guarantees that $E_{m,n}$ is closed.
	
	Suppose now, for contradiction, that every $f \in \mathcal{H}^p$ lies locally in $N_{\mathrm{loc}}(\mathbb{C}_{1/2})$. By definition, $\log^+ |f|$ admits a local harmonic majorant near each compact boundary segment. Thus for every $f \in \mathcal{H}^p$, there exist integers $m, n \in \mathbb{N}$ such that $f \in E_{m,n}$. Consequently, we obtain the countable exhaustion
	\begin{equation*}
		\mathcal{H}^p = \bigcup_{m,n=1}^{\infty} E_{m,n}.
	\end{equation*}
	Since $\mathcal{H}^p$ is a complete metric space (a Banach space for $p \ge 1$ and a quasi-Banach F-space for $0 < p < 1$), the Baire Category Theorem asserts that at least one set $E_{m,n}$ must contain an open ball. That is, there exist $g \in E_{m,n}$ and $r > 0$ such that $g + h \in E_{m,n}$ whenever $\|h\|_{\mathcal{H}^p} < r$.
	Using the elementary inequality $1 + |h| \le (1 + |g + h|)(1 + |g|)$, taking logarithms yields
	\begin{equation*}
		\log(1 + |h|) \le \log(1 + |g + h|) + \log(1 + |g|).
	\end{equation*}
	Integrating over $t \in [0, 1]$ and using $g, g+h \in E_{m,n}$, we deduce that for all $\|h\|_{\mathcal{H}^p} < r$,
	\begin{equation}\label{Dipon-vasu-p3-eqn-06}
		\sup_{0 < \varepsilon < 1/n} \int_{0}^{1} \log\left(1 + \left|h\left(\frac{1}{2} + \varepsilon + it\right)\right|\right) \, dt \le 2m.
	\end{equation}
	By scaling, choose $c > 0$ sufficiently small so that $\|c f\|_{\mathcal{H}^p} < r$ whenever $\|f\|_{\mathcal{H}^p} \le 1$. Substituting $h = cf$ into \eqref{Dipon-vasu-p3-eqn-06} yields $\int_{0}^{1} \log(1 + c|f|) \, dt \le 2m$. Applying the identity $\log(1 + |f|) \le \log^+(1/c) + \log(1 + c|f|)$, we obtain
	\begin{equation}\label{Dipon-vasu-p3-eqn-07}
		\sup_{\|f\|_{\mathcal{H}^p} \le 1} \, \sup_{0 < \varepsilon < 1/n} \int_{0}^{1} \log\left(1 + \left|f\left(\frac{1}{2} + \varepsilon + it\right)\right|\right) \, dt < +\infty.
	\end{equation}
	Restricting \eqref{Dipon-vasu-p3-eqn-07} to the subspace of Dirichlet polynomials $\mathcal{P} \subset \mathcal{H}^p$ and taking the boundary limit $\varepsilon \downarrow 0$, we conclude that
	\begin{equation}\label{Dipon-vasu-p3-eqn-08}
		\sup_{\substack{P \in \mathcal{P} \\ \|P\|_{\mathcal{H}^p} \le 1}} \int_{0}^{1} \log\left(1 + \left|P\left(\frac{1}{2} + it\right)\right|\right) \, dt < +\infty.
	\end{equation}
	However, the sequence of normalized Harper polynomials $(Q_X)_{X \ge 3} \subset \mathcal{P}$ satisfies $\|Q_X\|_{\mathcal{H}^p} = 1$ while $\int_{0}^{1} \log(1 + |Q_X(1/2 + it)|) \, dt \to +\infty$ as $X \to +\infty$ (from \eqref{Dipon-vasu-p3-eqn-05}), directly contradicting \eqref{Dipon-vasu-p3-eqn-08}. 
	
	Thus, the assumption $\mathcal{H}^p \subset N_{\mathrm{loc}}(\mathbb{C}_{1/2})$ is false. For every $0 < p < 2$, there exists a function $f \in \mathcal{H}^p$ which fails to belong to $N_{\mathrm{loc}}(\mathbb{C}_{1/2})$, resolving Problem~\ref{Dipon-vasu-p3-prob-01} in the negative.
\end{proof}

\begin{proof}[\bf Answer to Question~\ref{Dipon-vasu-p3-qsn-01}]
	Fix $0 < p < 2$. Let $D_X(s) := \sum_{n \le X} n^{-s}$ denote the partial sums of the Riemann zeta function, and let $H_X := \|D_X\|_{\mathcal{H}^p}$. Define the normalized polynomial
	\begin{equation*}
		P_X(s) := \frac{D_X(s)}{H_X},
	\end{equation*}
	so that $\|P_X\|_{\mathcal{H}^p} = 1$. By Harper's low-moment estimates \cite{Harper-ForumPi-2020}, the norm $H_X$ satisfies
	\begin{equation*}
		H_X \asymp \frac{\sqrt{X}}{\left(1 + \left(1 - p/2\right) \sqrt{\log \log X}\right)^{1/2}}.
	\end{equation*}
	Define the translated function $g_X(s) := P_X(s + 1/2)$. Since $g_X$ is a Dirichlet polynomial, its uniform convergence abscissa satisfies $\sigma_u(g_X) = -\infty$, placing $g_X \in \mathcal{N}_u$. 
	
	We first demonstrate that the global logarithmic norm $\|g_X\|_{0}$ is uniformly bounded in $X$. Since right translations are contractive on $\mathcal{H}^p$, we have
	\begin{equation*}
		\lim_{T \to \infty} \frac{1}{2T} \int_{-T}^{T} |g_X(\sigma + it)|^p \, dt \le \|P_X\|_{\mathcal{H}^p}^p = 1 \quad \text{for all } \sigma > 0.
	\end{equation*}
	Applying the elementary inequalities $\log(1 + x) \le \log 2 + \log^+ x \le \log 2 + x^p / p$ for $x \ge 0$, we obtain
	\begin{equation*}
		J_{g_X}(\sigma) \le \log 2 + \frac{1}{p}.
	\end{equation*}
	Taking the limit superior as $\sigma \to 0^+$ yields $\|g_X\|_{0} \le \log 2 + 1/p < +\infty$ uniformly for all $X \ge 3$.
	We now show that the local short-interval mass $L(g_X)$ explodes as $X \to +\infty$. Set $\sigma_X := 1/\log X$. By partial summation, for $0 \le t \le 1$ we have
	\begin{equation*}
		D_X\left(\frac{1}{2} + \sigma_X + it\right) = \frac{X^{1/2 - \sigma_X - it}}{1/2 - \sigma_X - it} + O(1).
	\end{equation*}
	Since $X^{-\sigma_X} = X^{-1/\log X} = e^{-1}$, evaluating the modulus yields the lower bound
	\begin{equation*}
		\inf_{0 \le t \le 1} \left| g_X(\sigma_X + it) \right| \ge C_p \left(1 + \left(1 - p/2\right)\sqrt{\log \log X}\right)^{1/2} \ge C_p' (\log \log X)^{1/4},
	\end{equation*}
	for some constant $C_p'$.
	Integrating over the short interval $[0, 1]$ gives
	\begin{align*}
		L(g_X) &\ge \int_{0}^{1} \log\left(1 + |g_X(\sigma_X + it)|\right) \, dt \\
		&\ge \log\left(1 + C_p' (\log \log X)^{1/4}\right) \\
		&= \frac{1}{4} \log \log \log X + O_p(1) \longrightarrow +\infty \quad \text{as } X \to +\infty.
	\end{align*}
	Thus, $\sup_{X} L(g_X) = +\infty$ while $\sup_{X} \|g_X\|_{0} < +\infty$. Consequently, no universal constant $C > 0$ can exist satisfying $L(f) \le C \|f\|_{0}$ for all $f \in \mathcal{N}_u$, establishing the failure of global-to-local logarithmic control and resolving Question~\ref{Dipon-vasu-p3-qsn-01} in the negative.
\end{proof}

\begin{rem}
	The Harper block sequence $(g_X)_{X \ge 3}$ provides a simpler and more transparent refutation of the quantitative embedding inequality $L(f) \le C \|f\|_{0}$ than the full gliding-hump argument. Indeed, because
	\begin{equation*}
		\sup_{\|f\|_{0} \le C_p} L(f) = +\infty,
	\end{equation*}
	no universal constant $C > 0$ can satisfy $L(f) \le C \|f\|_{0}$ across $\mathcal{N}_u$.
	
	However, because each $g_X$ is a Dirichlet polynomial, $L(g_X) < +\infty$ for every individual $X$. Consequently, the sequence $(g_X)_{X \ge 3}$ alone does not establish the existence of an element $g \in \mathcal{N}_u$ for which $L(g) = +\infty$. 
	To obtain this stronger qualitative statement, the gliding-hump construction is indispensable. By concatenating successively larger, well-separated Harper blocks into a single Dirichlet series $g = \sum_k a_k g_{X_k}$, the construction yields a function $g \in \mathcal{N}_u$ satisfying $\|g\|_{0} < +\infty$, while simultaneously
	\begin{equation*}
		\int_{0}^{1} \log\left(1 + \left|g\left(\sigma_k + it\right)\right|\right) \, dt \longrightarrow +\infty \quad \text{as } k \to +\infty
	\end{equation*}
	along a sequence $\sigma_k \to 0^+$. This provides the definitive qualitative proof that global mean logarithmic control fails to imply uniform short-interval logarithmic control.
\end{rem}

\noindent\textbf{Acknowledgment:} 
The second named author is supported by CSIR-UGC Fellowship,  New Delhi, India.

\end{document}